\numberwithin{equation}{section}
\numberwithin{figure}{section}
\theoremstyle{plain}
\newtheorem{thm}{\protect\theoremname}[section]
\theoremstyle{definition}
\newtheorem{example}[thm]{\protect\examplename}
\theoremstyle{remark}
\newtheorem{rem}[thm]{\protect\remarkname}
\theoremstyle{definition}
\newtheorem{defn}[thm]{\protect\definitionname}
\theoremstyle{plain}
\newtheorem{lem}[thm]{\protect\lemmaname}
\theoremstyle{plain}
\newtheorem{prop}[thm]{\protect\propositionname}
\def\makebbb#1{
    \expandafter\gdef\csname#1\endcsname{
        \ensuremath{\Bbb{#1}}}
}\makebbb{R}\makebbb{N}\makebbb{Z}\makebbb{C}\makebbb{H}\makebbb{E}\makebbb{H}\makebbb{P}\makebbb{B}\makebbb{Q}\makebbb{E}\makebbb{E}
\providecommand{\definitionname}{Definition}
\providecommand{\examplename}{Example}
\providecommand{\lemmaname}{Lemma}
\providecommand{\propositionname}{Proposition}
\providecommand{\remarkname}{Remark}
\providecommand{\theoremname}{Theorem}
\begin{document}
\title{Measure preserving holomorphic vector fields, invariant anti-canonical
divisors and Gibbs stability }
\author{Robert J.Berman}
\dedicatory{Dedicated to Laszlo Lempert on the occasion of his 70th anniversary}
\begin{abstract}
Let $X$ be a compact complex manifold whose anti-canonical line bundle
$-K_{X}$ is big. We show that $X$ admits no non-trivial holomorphic
vector fields if it is Gibbs stable (at any level). The proof is based
on a vanishing result for measure preserving holomorphic vector fields
on $X$ of independent interest. As an application it shown that,
in general, if $-K_{X}$ is big, there are no holomorphic vector fields
on $X$ that are tangent to a non-singular irreducible anti-canonical
divisor $S$ on $X.$ More generally, the result holds for varieties
with log terminal singularities and log pairs. Relations to a result
of Berndtsson about generalized Hamiltonians and coercivity of the
quantized Ding functional are also pointed out.
\end{abstract}

\maketitle

\section{Introduction}

Let $X$ be a compact complex manifold and denote by $K_{X}$ its
canonical line bundle, i.e. the top exterior power of the holomorphic
cotangent bundle. We will use additive notation for tensor products.
For a given positive integer $k$ set 
\begin{equation}
N_{k}:=\dim_{\C}H^{0}(X,-kK_{X})\label{eq:def of N k}
\end{equation}
 and assume that $N_{k}>0.$ The $N_{k}-$fold products $X^{N_{k}}$
of $X$ come with a natural effective anti-canonical divisor $\Q-$divisor
$\mathcal{D}_{N_{k}},$ whose support is defined by 
\[
\left\{ (x_{1},...x_{N_{k}}):\,\exists s_{k}\in H(X,-kK_{X}):\,\,s_{k}(x_{i})=0,\,\,\,\forall i,\,\,\,s_{k}\not\equiv0\right\} 
\]
In classical terminology the support of $\mathcal{D}_{N_{k}}$ thus
consists of all configurations $(x_{1},...x_{N_{k}})$ of points on
$X^{N_{k}}$ which are in ``bad position'' with respect to $H^{0}(X,-kK_{X}).$
Equivalently, the divisor $\mathcal{D}_{k}$ in $X^{N_{k}}$ may be
defined as $k^{-1}$ times the zero-locus (including multiplicities)
of the holomorphic section $\det S^{(k)}$ of $-kK_{X^{N_{k}}}\rightarrow X^{N_{k}}$
defined as the following Slater determinant
\begin{equation}
(\det S^{(k)})(x_{1},x_{2},...,x_{N_{k}}):=\det\left(s_{i}^{(k)}(x_{j})\right),\label{eq:slater determinant}
\end{equation}
 in terms of a given basis $s_{1}^{(k)},...,s_{N_{k}}^{(k)}$ in $H^{0}(X,-kK_{X})$
(see Lemma \ref{lem:The-zero-locus-of det section}).

Following \cite{berm8 comma 5}, we will say that $X$ is \emph{Gibbs
stable at level $k$} if $N_{k}>0$ and the $\Q-$divisor $\mathcal{D}_{N_{k}}$
on $X^{N_{k}}$ is klt (Kawamata Log Terminal). This means, loosely
speaking, that the divisor $\mathcal{D}_{N_{k}}$ has mild singularities
in the sense of the Minimal Model Program in birational algebraic
geometry \cite{ko0}. Furthermore, $X$ is called \emph{Gibbs stable}
if it is Gibbs stable for any sufficiently large level $k.$ The definition
of Gibbs stability is motivated by the probabilistic approach to the
construction of Kähler-Einstein metrics on Fano manifolds introduced
in \cite{berm8 comma 5}, as briefly recalled in Section \ref{subsec:Relations-to-the}.
\begin{thm}
\label{thm:gibbs intro}Assume that $-K_{X}$ is big and that $X$
is Gibbs stable at some level $k.$ Then $X$ admits no non-trivial
holomorphic vector fields.
\end{thm}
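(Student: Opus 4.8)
The plan is to reduce the statement to the vanishing result for measure preserving holomorphic vector fields by transplanting the problem to the product $Y:=X^{N_{k}}$ and using Gibbs stability to manufacture an invariant measure there. So let $V$ be a holomorphic vector field on $X$; I will show $V\equiv0$. The field $V$ induces the product (``diagonal'') holomorphic vector field $\hat{V}:=(V,\dots,V)$ on $Y$, whose flow $F_{t}$ is the $N_{k}$-fold product of the flow of $V$, and clearly $\hat{V}\equiv0$ if and only if $V\equiv0$; thus it suffices to prove $\hat{V}\equiv0$. Since $-K_{X}$ is big, the external tensor product $-K_{Y}=(-K_{X})\boxtimes\cdots\boxtimes(-K_{X})$ is big as well, so $Y$ is a compact complex manifold to which the vanishing result applies. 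Moreover, because the support of $\mathcal{D}_{N_{k}}$ is the intrinsically defined ``bad position'' locus and $\det S^{(k)}$ is determined up to a non-zero scalar by $H^{0}(X,-kK_{X})$, every biholomorphism of $X$ induces a biholomorphism of $Y$ preserving the divisor $\mathcal{D}_{N_{k}}$; in particular $F_{t}$ preserves $\mathcal{D}_{N_{k}}$, and hence, by Lemma \ref{lem:The-zero-locus-of det section}, the zero divisor $k\mathcal{D}_{N_{k}}$ of the section $\det S^{(k)}$ of $-kK_{Y}$.

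Next I would extract the invariant measure from Gibbs stability. Since $\det S^{(k)}$ is a section of $-kK_{Y}$, the expression $\mu_{k}:=|\det S^{(k)}|^{-2/k}$ is a well-defined positive measure on $Y$ (independent, up to a positive multiplicative constant, of the chosen basis), singular along $\mathcal{D}_{N_{k}}$. Gibbs stability at level $k$ --- the klt property of $\mathcal{D}_{N_{k}}$ --- is precisely the statement that $|\det S^{(k)}|^{-2/k}$ is locally integrable, so that $0<\int_{Y}\mu_{k}<\infty$. I then claim $\hat{V}$ preserves $\mu_{k}$. Indeed, $F_{t}^{*}\det S^{(k)}$ is a holomorphic section of $-kK_{Y}$ with the same zero divisor $k\mathcal{D}_{N_{k}}$ as $\det S^{(k)}$; since $Y$ is compact and connected, their quotient is a nowhere vanishing holomorphic function, hence a non-zero constant $\chi(t)$, and the flow property makes $t\mapsto\chi(t)=e^{t\alpha}$ a one-parameter subgroup of $\C^{*}$. (This merely records that $\hat{V}$ acts by a scalar on the one-dimensional line $\bigwedge^{N_{k}}H^{0}(X,-kK_{X})$ spanned by $\det S^{(k)}$.) It follows that $F_{t}^{*}\mu_{k}=|\chi(t)|^{-2/k}\mu_{k}=e^{-2t\,\mathrm{Re}(\alpha)/k}\mu_{k}$; as pull-back by the diffeomorphism $F_{t}$ leaves the finite total mass $\int_{Y}\mu_{k}$ unchanged, we must have $\mathrm{Re}(\alpha)=0$, so that $F_{t}^{*}\mu_{k}=\mu_{k}$ for all $t$. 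Thus $\hat{V}$ is a holomorphic vector field on $Y$ preserving the finite measure $\mu_{k}$.

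To conclude, I would apply the vanishing result for measure preserving holomorphic vector fields to the pair $(Y,\mu_{k})$: as $-K_{Y}$ is big and $\mu_{k}$ is a finite measure of klt type preserved by $\hat{V}$, the result yields $\hat{V}\equiv0$, and therefore $V\equiv0$, proving Theorem \ref{thm:gibbs intro}. (Alternatively, pushing $\mu_{k}$ forward under a coordinate projection $Y\to X$ produces a finite, absolutely continuous, $V$-invariant measure on $X$, to which the vanishing result could be applied directly on $X$.) The genuine difficulty is entirely contained in the vanishing result itself, which is the analytic heart of the paper; within the present reduction the only point requiring care is the rigorous justification of exact invariance of the singular measure $\mu_{k}$, which rests on its klt-integrability together with the fact that $F_{t}$ carries the singular locus $\mathcal{D}_{N_{k}}$ into itself.
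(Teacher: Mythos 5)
Your proposal is correct and takes essentially the same route as the paper's first proof: reduce to $X^{N_{k}}$, use Lemma \ref{lem:The-zero-locus-of det section} to see that the diagonal flow scales $\det S^{(k)}$ by a nowhere-vanishing scalar, deduce invariance of the finite klt measure $\left|\det S^{(k)}\right|^{-2/k}$ from conservation of total mass, and then invoke Theorem \ref{thm:mu intro}. This matches the paper's argument (via Proposition \ref{prop:invariant klt divisors}) step for step, including the unit-modulus scalar point, with the klt hypothesis supplying the $L_{loc}^{p}$, $p>1$, integrability that Theorem \ref{thm:mu intro} requires.
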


The special case when $X$ is Fano, i.e. $-K_{X}$ is ample and $k$
is taken sufficiently large appears in \cite[Prop 6.5]{berm8 comma 5},
but, unfortunately, the proof was inaccurately formulated. Here the
proof will be corrected and - as a bonus - two more proofs of Theorem
\ref{thm:gibbs intro} will be provided (of statements that are a
bit weaker). More generally, we will show that Theorem \ref{thm:gibbs log }
holds in the more general setting of log pairs $(X,\Delta),$ where
$X$ is a normal variety endowed with a $\Q-$divisor $\Delta$ (see
Section \ref{sec:Extension-to-the}). In particular, taking $\Delta$
to be trivial shows that Theorem \ref{thm:gibbs intro} holds more
generally when $X$ is a normal variety. We recall that the class
of varieties for which $-K_{X}$ is big contains the vast class of
Fano type varieties (such as toric varieties). However, in general,
the anti-canonical ring of a variety $X$ with big $-K_{X}$ need
not be finitely generated, even if $X$ is non-singular \cite[Ex 3.7]{c-g}.

The starting point of the proofs is the standard analytic characterization
of the klt condition of a divisor, which in the present setup means
that the measure on $X^{N_{k}}$ corresponding to the anti-canonical
divisor $\mathcal{D}_{k}$, symbolically denoted by $\left|\det S^{(k)}\right|^{-2/k}$,
is in $L_{loc}^{p}$ for some $p>1.$ In particular, the corresponding
probability measure $\mu^{(N_{k})}$ on $X^{N_{k}}$ is then well-defined:
\begin{equation}
\mu^{(N_{k})}:=\frac{1}{\mathcal{Z}_{N_{k}}}\left|\det S^{(k)}\right|^{-2/k},\,\,\,\mathcal{Z}_{N_{k}}:=\int_{X^{N_{k}}}\left|\det S^{(k)}\right|^{-2/k}.\label{eq:def of mu N and Z N}
\end{equation}
This measure is canonically attached to $X$ and thus invariant under
the diagonal action on $X^{N_{k}}$ by any biholomorphism of $X$
(see Lemma \ref{lem:The-zero-locus-of det section}). In particular,
$\mu^{(N_{k})}$ is invariant under the diagonal action on $X^{N_{k}}$
of the flow of any given holomorphic vector field on $X.$ Hence,
theorem \ref{thm:gibbs intro} follows from the following result of
independent interest (applied to $X^{N_{k}})$:
\begin{thm}
\label{thm:mu intro}Let $X$ be a compact manifold such that $-K_{X}$
is big and $V^{1,0}$ a holomorphic vector field on $X.$ If the flow
of $V^{1,0}$ preserves a measure $\mu$ on $X$ with a density in
$L_{loc}^{p}$ for some $p>1,$ then $V^{1,0}$ vanishes identically. 
\end{thm}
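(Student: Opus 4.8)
The plan is to reduce the statement to showing that the Lie derivative along $V^{1,0}$ acts trivially on every space of pluri-anticanonical sections, and then to invoke bigness. For each $k$ with $N_k>0$ the vector field induces a $\C$-linear operator $A_k$ on $H^0(X,-kK_X)$, the infinitesimal action of $V^{1,0}$ by Lie derivative on sections of the natural bundle $-kK_X$. Since $X$ is compact, the real flows of the two commuting real vector fields $V^{1,0}+\overline{V^{1,0}}$ and $i(V^{1,0}-\overline{V^{1,0}})$ are complete and commute (using $[V^{1,0},\overline{V^{1,0}}]=0$), so they assemble into a globally defined holomorphic $\C$-action $H_{\tau}$, $\tau\in\C$, by biholomorphisms of $X$; on holomorphic sections it acts by $H_{\tau}^{*}=\exp(\tau A_k)$. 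The hypothesis that the flow of $V^{1,0}$ preserves $\mu$ is read as $L_{V^{1,0}}\mu=0$, equivalently $H_{\tau}^{*}\mu=\mu$ for all $\tau\in\C$. Once $A_k=0$ is established for all $k$, bigness of $-K_X$ yields, for $k$ large, a birational pluri-anticanonical map $\Phi_k:X\dashrightarrow\P(H^0(X,-kK_X)^{*})$ which is then $H_{\tau}$-invariant, forcing $H_{\tau}=\mathrm{id}$ on a dense open set and hence $V^{1,0}\equiv0$.

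To prove $A_k=0$ I would introduce, for a nonzero $\sigma\in H^0(X,-kK_X)$ and small $\epsilon>0$, the scaling functional
\[
I_{\epsilon}(\sigma):=\int_{X}\Big(\frac{d\nu_{\sigma}}{d\mu}\Big)^{\epsilon}\,d\mu,\qquad \nu_{\sigma}:=\left|\sigma\right|^{-2/k},
\]
where $\nu_{\sigma}$ is the canonical measure attached to $\sigma$ (locally $\sigma=f\,(\partial_z)^{\otimes k}$ and $\nu_{\sigma}=|f|^{-2/k}\beta_n$). Three properties drive the argument. First, $I_{\epsilon}(\sigma)<\infty$ for $\epsilon$ small: writing $d\mu=\rho\,\beta_n$ locally and applying Hölder with exponents $p$ and $p'=p/(p-1)$ bounds $I_{\epsilon}(\sigma)$ by $\|\rho\|_{L^{(1-\epsilon)p}}^{1-\epsilon}$ times $\big(\int_{X}|f|^{-2\epsilon p'/k}\beta_n\big)^{1/p'}$, and the last integral is finite because $|\sigma|^{-\delta}$ is locally integrable for $\delta$ small (positivity of the log-canonical threshold of a nonzero holomorphic section); here the assumption $p>1$ is exactly what makes a nontrivial conjugate exponent available. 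Second, $I_{\epsilon}$ is positive and homogeneous: $I_{\epsilon}(c\sigma)=|c|^{-2\epsilon/k}I_{\epsilon}(\sigma)$. Third, and crucially, $I_{\epsilon}$ is invariant under the induced action on sections: since $H_{\tau}$ is a biholomorphism, functoriality of the $|\cdot|^{-2/k}$-construction (the naturality used in Lemma \ref{lem:The-zero-locus-of det section}) gives $H_{\tau}^{*}\nu_{\sigma}=\nu_{H_{\tau}^{*}\sigma}$, and $H_{\tau}^{*}\mu=\mu$; a change of variables then yields $I_{\epsilon}(\exp(\tau A_k)\sigma)=I_{\epsilon}(\sigma)$ for every $\tau\in\C$.

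Finally I would derive a contradiction from $A_k\neq0$. Pick $\sigma$ with $A_k\sigma\neq0$ and set $\sigma_{\tau}:=\exp(\tau A_k)\sigma$. If $A_k$ has a nonzero eigenvalue $\lambda$, choosing a generalized eigenvector and a complex ray $\tau=re^{i\theta}$ with $\mathrm{Re}(e^{i\theta}\lambda)>0$ forces $\|\sigma_{\tau}\|\to\infty$; if $A_k$ is nilpotent but nonzero, $\sigma_{\tau}$ is a nonconstant polynomial in $\tau$ and $\|\sigma_{\tau}\|\to\infty$ already along the real axis. In either case, using homogeneity and the uniform bound $M_{\epsilon}:=\sup_{\|\sigma\|=1}I_{\epsilon}(\sigma)<\infty$, we get $I_{\epsilon}(\sigma_{\tau})=\|\sigma_{\tau}\|^{-2\epsilon/k}I_{\epsilon}(\sigma_{\tau}/\|\sigma_{\tau}\|)\le\|\sigma_{\tau}\|^{-2\epsilon/k}M_{\epsilon}\to0$, contradicting $I_{\epsilon}(\sigma_{\tau})\equiv I_{\epsilon}(\sigma)>0$; hence $A_k=0$. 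The main obstacle I anticipate is the uniform finiteness $M_{\epsilon}<\infty$, i.e. a uniform integrability estimate for the family $\{|\sigma|^{-\delta}:\|\sigma\|=1\}$: this is a uniform Skoda / openness-type bound that follows from compactness of $X$ together with a lower bound on log-canonical thresholds over the unit sphere of sections, but it must be stated with care. A secondary delicate point is the reading of "measure preserving": the purely real flow does not suffice (rotations of $\P^{1}$ preserve the Fubini--Study measure), so one genuinely needs invariance under the full $\C$-action, i.e. $L_{V^{1,0}}\mu=0$.
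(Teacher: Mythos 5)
Your proposal follows the same skeleton as the paper's proof (Theorem \ref{thm:sing metric Fano}): build from $\mu$ and fractional powers of a section a flow-invariant functional on the finite-dimensional space $H^{0}(X,-kK_{X})$, use the fact that a non-trivial complex one-parameter group $\exp(\tau A_{k})$, $\tau\in\C$, cannot stay bounded, and then convert triviality of the action on sections into $V^{1,0}\equiv0$ via the Kodaira map and bigness (the paper uses Siegel's lemma to get maximal rank; your "birational for large $k$" serves the same purpose). The genuine difference is the choice of functional, and it matters. The paper takes $\mathcal{N}_{\epsilon}(s_{k})=\bigl(\int_{X}(|s_{k}|^{2}e^{-k\phi})^{\epsilon/k}e^{-\phi}\bigr)^{k/2\epsilon}$ with $\phi$ the metric induced by $\mu$; locally the integrand is $|f|^{2\epsilon/k}\rho^{1+\epsilon}$, a \emph{positive} power of the section, so finiteness is elementary once $\epsilon\leq p-1$ (this is where $p>1$ enters), no Skoda or log-canonical-threshold input is needed, and since $\mathcal{N}_{\epsilon}$ is one-homogeneous and lower semicontinuous the sublevel set $\{\mathcal{N}_{\epsilon}<1\}$ is a bounded invariant domain, so Liouville applied to each orbit kills the action. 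Your $I_{\epsilon}$ instead integrates the \emph{negative} power $|f|^{-2\epsilon/k}$ against $\rho^{1-\epsilon}$, which is why you need H\"older (your use of $p>1$ is for the conjugate exponent, not to absorb $\rho^{\epsilon}$), positivity of the log canonical threshold of each section, and a uniform bound over the unit sphere; your endgame (eigenvalue/nilpotent dichotomy forcing $\|\sigma_{\tau}\|\to\infty$, against the homogeneity of degree $-2\epsilon/k$) is a correct substitute for Liouville, and your naturality claim $H_{\tau}^{*}\nu_{\sigma}=\nu_{H_{\tau}^{*}\sigma}$ together with the remark that invariance under the full complex flow is what is needed are both right and consistent with the paper's setup.

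The one real gap is the step you flagged: $M_{\epsilon}=\sup_{\|\sigma\|=1}I_{\epsilon}(\sigma)<\infty$. A lower bound on the log canonical thresholds over the unit sphere does \emph{not} by itself bound the integrals $\int_{X}|f_{\sigma}|^{-2\epsilon p'/k}$ uniformly: finiteness of each integral is a pointwise statement, and the crude attempt to put the supremum inside the integral fails because $\inf_{\|\sigma\|=1}|f_{\sigma}(x)|=0$ at every point as soon as $N_{k}\geq2$. What you actually need is the effective semicontinuity of integrability exponents (Demailly--Koll\'ar type: if $t$ is below the threshold at $\sigma_{0}$, then the integrals $\int|f_{\sigma}|^{-2t}$ are uniformly bounded for $\sigma$ near $\sigma_{0}$), applied over the compact sphere with a finite cover, plus the finite-measure H\"older trick to pass to a common exponent; this is precisely the "semi-continuity of integrability thresholds, as in the appendix of \cite{ber1}," that the paper invokes for an analogous uniform bound in the proof of Theorem \ref{thm:invariant non klt}. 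With that ingredient your argument closes, but it is strictly heavier than the paper's: the whole point of the paper's positive-power functional is that it sidesteps every uniform Skoda/lct estimate.
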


In the case when the local density of $\mu$ is of the form $e^{-\phi}$
for a local function $\phi$ of the form 
\begin{equation}
\phi=\sum_{i=1}^{m}\lambda_{i}\log|f_{i}|+\psi\label{eq:phi is psh plus log intro}
\end{equation}
 with $f_{i}$ holomorphic, $\lambda_{i}\in\R$ and $\psi$ plurisubharmonic
it is enough to assume that the density of $\mu$ is in $L_{loc}^{1},$
i.e. that $\mu$ has finite total mass. Indeed, by the resolution
of Demailly's strong openness conjecture \cite{g-z} this implies
$L_{loc}^{p}-$integrability for some $p>1$ (this is a strengthening
of Demailly-Kollar's openness conjecture, concerning the case when
$\lambda_{i}\geq0,$ previously established in \cite{bern}). The
assumption that $-K_{X}$ be big in the previous proposition can not
be dispensed with, as illustrated by the case when $X=\C/(\Z+i\Z)$
and $\mu$ is the measure on $X$ induced by Lebesgue measure on $\C$
(which is invariant under holomorphic translations). This example
also shows, by taking products, that it is not enough to assume that
$K_{X}$ is ``next to big'' in the sense that it has next to maximal
Kodaira dimension. On the other hand, in the opposite case that $K_{X}$
is big it is well-known that $X$ admits no non-trivial holomorphic
vector fields at all (see Remark \ref{rem:general type}).

In the second proof of Theorem \ref{thm:gibbs intro} we observe that
Theorem \ref{thm:mu intro} can be deduced from a result of Berndtsson
\cite{bern} about generalized Hamiltonians in the case that $-K_{X}$
is big and nef and the current defined by the Ricci curvature of $\mu$
is positive (i.e. when $\phi$ is psh). This approach, however, does
not apply in the more general setting of log pairs $(X,\Delta)$ when
$X$ is singular or the divisor $\Delta$ is non-effective. Conversely,
we show that Theorem \ref{thm:mu intro} implies a partial generalization
of Berndtsson's result.

The third proof of Theorem \ref{thm:gibbs intro} exploits the connection
between Gibbs stability at level $k$ and coercivity of the quantized
Ding functional established in \cite{rtz,ber13} and applies when
$k$ is taken sufficiently large to ensure that the corresponding
Kodaira map is a bimeromorphism.

In the final section the concept of (log) Gibbs stability is applied
to prove the following
\begin{thm}
\label{thm:not klt intro}Let $X$ be a compact complex manifold such
that $-K_{X}$ is big and $s$ a non-trivial holomorphic section of
$-K_{X},$ whose zero-locus $S$ is a non-singular connected subvariety
of $X$ . Then $S$ is not preserved by the flow of any non-trivial
holomorphic vector field on $X.$ More generally, the result holds
when the varieties $X$ and $S$ have log terminal singularities and
$S,$ viewed as an anti-canonical divisor on $X,$ is irreducible.
\end{thm}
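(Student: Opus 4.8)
The plan is to deduce Theorem~\ref{thm:not klt intro} from Theorem~\ref{thm:mu intro} by constructing, from the anti-canonical section $s$ whose zero locus is $S$, a measure $\mu$ on $X$ whose density lies in $L^p_{loc}$ for some $p>1$ and whose invariance under the flow would be forced by the hypothesis that the flow preserves $S$. The natural candidate is the measure
\[
\mu:=|s|^{-2/1}\quad\text{symbolically, i.e. }\ \mu=\frac{i^{n^2}s^{-1}\wedge\overline{s^{-1}}}{\,\cdots\,},
\]
the adjunction-type measure attached to the anti-canonical divisor $S$. Indeed, a section $s$ of $-K_X$ is precisely a holomorphic $n$-vector field (a section of the top exterior power of the tangent bundle), so $s^{-1}$ is naturally a meromorphic $n$-form with a pole along $S$, and pairing it with its conjugate produces a positive measure on $X\setminus S$ with local density of the form $|f|^{-2}$ near $S$.

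First I would set up adjunction carefully: near a smooth point of $S$ cut out by a local coordinate $z_1$, the density of $\mu$ behaves like $|z_1|^{-2}\,dV$, which is \emph{not} locally integrable. This is the crux of the matter and explains why the statement is nontrivial. The resolution is that the klt/integrability threshold for a reduced smooth divisor $S$ is exactly the borderline case, so I would instead pass to the \emph{fractional} power: for $\beta\in(0,1)$ the measure $|s|^{-2\beta}$ has density $\sim|z_1|^{-2\beta}$ which \emph{is} in $L^p_{loc}$ for $p$ slightly larger than $1$, since $2\beta<2$. I would verify that because $S$ is nonsingular, the only singularities of the density occur along $S$ and are of this controlled type, so the finite-mass/local-$L^p$ hypothesis of Theorem~\ref{thm:mu intro} holds; here I can also invoke the density form \eqref{eq:phi is psh plus log intro} with $m=1$, $\lambda_1=\beta$, to use the $L^1_{loc}$ version supplied by strong openness.

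Next comes the invariance step. Let $V^{1,0}$ be a holomorphic vector field whose flow $F_t$ preserves $S$. The key observation is that $F_t$ preserves the divisor $S$, hence $F_t^*s=e^{g_t}s$ for a nowhere-vanishing holomorphic function $e^{g_t}$ on $X$; since $X$ is compact and connected, $g_t$ is a constant $c_t$. Therefore $F_t^*\mu=|e^{c_t}|^{-2\beta}\mu=e^{-2\beta\,\mathrm{Re}\,c_t}\mu$, i.e. $\mu$ is scaled by a constant under the flow. The measure $F_t^*\mu$ is thus a constant multiple of $\mu$; \emph{a priori} this is not literal invariance, so I would correct for the scaling. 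Because the flow is a one-parameter group, $t\mapsto \mathrm{Re}\,c_t$ is additive (a homomorphism $\R\to\R$), and I would argue that after rescaling $\mu$ to a probability measure (using that it has finite mass, guaranteed by the $\beta<1$ choice) the normalized measure $\mu/\mu(X)$ is genuinely $F_t$-invariant. Applying Theorem~\ref{thm:mu intro} to this invariant $L^p_{loc}$ measure yields $V^{1,0}\equiv0$, proving that no nontrivial holomorphic vector field preserves $S$.

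The main obstacle I anticipate is the borderline integrability: the naive adjunction measure attached to a reduced smooth anti-canonical divisor sits exactly at the non-integrable threshold, so the fractional-power trick (or an equivalent perturbation to make the density klt) is essential and must be justified precisely --- in particular one must ensure that passing to $|s|^{-2\beta}$ still produces a flow-invariant measure, which works because the transformation law $F_t^*s=e^{c_t}s$ is insensitive to the exponent. For the singular generalization, where $X$ and $S$ have log terminal singularities and $S$ is irreducible as an anti-canonical divisor, I would replace the smooth-point computation by the klt hypothesis itself: the log terminal condition on the pair is precisely what guarantees the relevant fractional adjunction measure lies in $L^p_{loc}$, and the transformation law for the meromorphic volume form persists on the normal variety by working on the regular locus and extending across the singular set, which has measure zero.
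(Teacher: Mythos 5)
Your proposal has a genuine gap, and it occurs at the very first step: the object $|s|^{-2\beta}$, for $\beta\in(0,1)$, is not a measure on $X$. A measure arises from a metric on $-K_{X}$, equivalently from an anti-canonical $\Q$-divisor of total coefficient \emph{one}: if $s_{k}\in H^{0}(X,-kK_{X})$, the local densities $|s_{k,U}|^{-2/k}$ transform by $|g_{UV}|^{-2}$, where $g_{UV}$ are the transition functions of $-K_{X}$ (Jacobians), which is exactly the transformation law of a density against $i^{n^{2}}dz\wedge d\bar{z}$. For $\beta<1$ the local expressions $|s_{U}|^{-2\beta}$ transform by $|g_{UV}|^{-2\beta}$, so $|s_{U}|^{-2\beta}i^{n^{2}}dz\wedge d\bar{z}$ does not glue to a global measure. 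To repair this you must multiply by $e^{-(1-\beta)\phi_{0}}$ for an auxiliary metric $\phi_{0}$ on $-K_{X}$ (i.e. use the metric $\beta\log|s|^{2}+(1-\beta)\phi_{0}$), and no such $\phi_{0}$ is flow-invariant in general --- if one existed with the required integrability, the theorem would be immediate. So your invariance step collapses: the transformation law $F_{t}^{*}s=e^{c_{t}}s$ controls only the $|s|^{-2\beta}$ factor, not the auxiliary factor that is forced on you. A decisive sanity check: your argument never uses connectedness/irreducibility of $S$, only its non-singularity, yet the theorem is false without that hypothesis. Take $X=\P^{1}$ and $s=z\,\partial/\partial z$, so that $S=\{0\}\cup\{\infty\}$ is non-singular and preserved by the flow of the non-trivial field $V^{1,0}=z\,\partial/\partial z$; near each point of $S$ your local density $|z|^{-2\beta}$ lies in $L_{loc}^{p}$. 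Your argument would then conclude $V^{1,0}\equiv0$, a contradiction; what fails is precisely that every completion of $|z|^{-2\beta}$ to a measure on $\P^{1}$ (e.g. by a Fubini--Study factor) breaks the $\C^{*}$-invariance.

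The paper's proof circumvents exactly this obstruction by passing to the product $X^{N_{k}}$ with $N_{k}>1$: there the coefficient deficit $1-\gamma$ can be filled with a \emph{flow-invariant} object, namely $\gamma\mathcal{D}_{N_{k}}+(1-\gamma)\left(\pi_{1}^{*}S+\cdots+\pi_{N_{k}}^{*}S\right)$ (formula \ref{eq:perturbed divisor intro}), which is a genuine anti-canonical $\Q$-divisor on $X^{N_{k}}$ since the weights sum to one, and is preserved by the diagonal flow because both $\mathcal{D}_{N_{k}}$ and $S$ are. The real work --- which your proposal has no analogue of --- is showing this divisor is klt for small $\gamma>0$: one factorizes $\det S^{(k)}=s(x_{1})^{\otimes l}\cdots s(x_{N_{k}})^{\otimes l}q$ with $l<k$ (using $N_{k}>1$ and antisymmetry), and then invokes inversion of adjunction together with the strict positivity of the global log canonical thresholds $\text{lct}(X,-K_{X})$ and $\text{lct}(S,-K_{X|S})$, plus semicontinuity of integrability thresholds; this is where the irreducibility of $S$ and the log terminal hypotheses enter. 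Only then is the vanishing theorem (Theorem \ref{thm:mu intro}, in its log form Theorem \ref{thm:sing metric log Fano}) applied --- on $X^{N_{k}}$, not on $X$.
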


In this case the divisor $S$ is not klt on $X,$ i.e. the corresponding
measure $\left|s\right|^{-2/k}$ is not in $L_{loc}^{1},$ which prevents
a direct application of Theorem \ref{thm:mu intro}. But for any sufficiently
small positive number $\gamma$ the anti-canonical divisor
\begin{equation}
\gamma\mathcal{D}_{N_{k}}+(1-\gamma)\left(\pi_{1}^{*}S+\cdots\pi_{N_{k}}^{*}S\right),\label{eq:perturbed divisor intro}
\end{equation}
 on $X^{N_{k}}$ - where $\pi_{i}$ denotes the projection from $X^{N_{k}}$
to the $i$ th factor $X$ - is klt for any fixed $k$ such that $N_{k}>1$
(this, essentially, amounts to Gibbs stability of the log pair $(X,(1-\gamma)S$)
). Since, $\mathcal{D}_{N_{k}}$ is invariant under the diagonal flow
of any holomorphic vector field on $X$ Theorem \ref{thm:not klt intro}
thus follows from Theorem \ref{thm:mu intro} applied to $X^{N_{k}},$
endowed with the measure induced by the anti-canonical divisor in
formula \ref{eq:perturbed divisor intro}. 

In the case when $-K_{X}$ is ample (and $S$ is assumed non-singular,
which implies connectedness when $-K_{X}$ is ample and $\dim X>1$)
Theorem \ref{thm:not klt intro} was shown in \cite{ber1}, confirming
a conjecture of Donaldson \cite{do2}. The proof in \cite{ber1} uses
the coercivity of the corresponding log Mabuchi functional (which,
loosely speaking, can be viewed as the large $k$ limit of the present
proof). A different proof was then given in \cite[Thm 2.8]{s-w},
including a generalization to the case when $S$ is the non-singular
zero-locus of a holomorphic section $s_{k}$ of $-kK_{X}$ for $k>1.$
Note, however, that for $k>1$ one can directly apply Theorem \ref{thm:mu intro}
to the measure $\left|s_{k}\right|^{-2/k}$ on $X$ when $-K_{X}$
is big (without assuming connectedness of $S$). The proof in \cite{s-w}
does not seem to generalize to the case when $-K_{X}$ is only assumed
big (or either $X$ or $S$ is singular). Indeed, it relies, in particular,
on the Kodaira vanishing $H^{\dim X-1}(X,\mathcal{O}_{X})=0,$ which
does not hold when $-K_{X}$ is merely big (see Example \ref{exa:X surface}). 

Note that, by adjunction, the variety $S$ appearing in Theorem \ref{thm:not klt intro}
is Calabi-Yau, i.e. $K_{S}$ is trivial. This construction of Calabi-Yau
varieties $S$ and the corresponding log Calabi-Yau variety $X-S$
play a prominent role in mirror symmetry (see \cite{ba} for the case
when $X$ is a Gorenstein toric Fano variety and \cite{au} where
$-K_{X}$ is only assumed to be effective).

\subsection{\label{subsec:Relations-to-the}Relations to the Yau-Tian-Donaldson
conjecture and K-stability}

We conclude the introduction with a discussion of some relations to
the Yau-Tian-Donaldson conjecture, that motivate some of the considerations
in the present work. According to the Yau-Tian-Donaldson conjecture
a Fano variety $X$ admits a Kähler-Einstein metric if and only if
$(X,-K_{X})$ is K-polystable. The cases when $X$ is non-singular
was established in \cite{c-d-s} and, very recently, the singular
case was settled in \cite{l-x-z}. As for the notion of Gibbs stability
it arose in the probabilistic construction of Kähler-Einstein metrics
on Fano manifolds proposed in \cite{berm8 comma 5}. Relations to
the Yau-Tian-Donaldson conjecture - in particular the relation between
Gibbs stability and K-stability - are discussed in the survey \cite{ber11}.
Here we will just highlight the role of holomorphic vector fields,
as a motivation for Theorem \ref{thm:gibbs intro}. First of all,
the assumption that $X$ is Gibbs stable at level $k$ is precisely
what ensures that $\mu^{(N_{k})}$ in formula \ref{eq:def of mu N and Z N}
is a well-defined probability measure on $X^{N_{k}}.$ Assuming that
this is the case for $k$ sufficiently large, i.e that $X$ is Gibbs
stable, it is conjectured in \cite{berm8 comma 5} that the measure
on $X$ defined as the push-forward of $\mu^{(N_{k})}$ to $X$ converges
weakly, as $k\rightarrow\infty,$ to the normalized volume form $dV_{KE}$
of a Kähler-Einstein metric on $X.$ Since $\mu^{(N_{k})}$ is biholomorphically
invariant so is $dV_{KE}.$ As a consequence, $X$ can not admit any
non-trivial holomorphic vector field $V^{1,0}.$ Indeed, otherwise
its flow $F_{\tau}$ would act non-trivially on the volume form $dV_{KE}$
(as follows, for example, from the argument in Remark \ref{rem:vanishing in ample smooth case}).
The idea of the first proof of Theorem \ref{thm:gibbs intro} is to
show that the flow in fact also acts non-trivially on the measures
$\mu^{(N_{k})},$ even though $\mu^{(N_{k})}$ is singular. Thus,
in Theorem \ref{thm:gibbs intro} one only needs to assume that $X$
is Gibbs stable at\emph{ some} level $k.$ Could this, a priori weaker
assumption, imply that $X$ is Gibbs stable? After all, it seems natural
to expect that the divisor $\mathcal{D}_{k}$ becomes less singular
at $k$ is increased. More precisely, the answer would be affirmative
if the log canonical threshold $\text{lct}(\mathcal{D}_{k})$ of $\mathcal{D}_{k}$
were increasing with respect to $k,$ since $X$ is Gibbs stable at
level $k$ iff $\text{lct}(\mathcal{D}_{k})>1$ (the definition of
the log canonical threshold of a divisor is recalled in Remark \ref{rem:lct}).
For example, this is the case when $X=\P^{1},$ or, more generally,
for any Fano orbifold curve \cite{berm12}. Unfortunately, the behavior
of $\text{lct}(\mathcal{D}_{k})$ in higher dimensions appears, however,
to be rather elusive. 

Part of the aforementioned conjecture in \cite{berm8 comma 5} says
that any Gibbs stable Fano manifold admits a unique Kähler-Einstein
metric (leaving the convergence issue aside). A slightly weaker version
of this conjecture was established in \cite{f-o}, where it was shown
that if\emph{ $X$ is uniformly Gibbs stable} (in the sense that there
exists $\epsilon>0$ such that $\text{lct}(\mathcal{D}_{k})\geq1+\epsilon$
for any sufficiently large $k),$ then $X$ is uniformly K-stable
(and thus admits a unique Kähler-Einstein metrics, by the solution
of the Yau-Tian-Donaldson conjecture). In particular, $X$ is K-stable,
which, in turn, directly implies that $X$ admit no holomorphic vector
fields (using that a holomorphic vector field induces a test configuration
for $X$). This argument does not, however, apply if $X$ is merely
assumed to be Gibbs stable (since the problem whether Gibbs stability
implies K-stability is open). But in Section \ref{sec:The-third-proof}
a ``quantized'' analog of this line of argument is pursued, showing
the Gibbs stability at a sufficiently high level $k$ implies that
$X$ admits no holomorphic vector fields.

\subsection{Acknowledgments}

Thanks to Rolf Andreasson and Bo Berndtsson for discussions and helpful
comments. This work was supported by grants from the Knut and Alice
Wallenberg foundation, the Göran Gustafsson foundation and the Swedish
Research Council.

\section{The first proof of Theorem \ref{thm:gibbs intro} and vector fields
preserving singular metrics}

\subsection{\label{subsec:Setup}Setup}

\subsubsection{Vector fields and their flows $F_{\tau}$}

Let $X$ be a complex manifold and $V^{1,0}$ a holomorphic vector
field on $X$ of type $(1,0),$ i.e. if $z\in\C^{n}$ are local holomorphic
coordinates on $X,$ then $V^{1,0}=\sum_{i=1}^{n}v_{i}(z)\partial/\partial z_{i}$
for holomorphic functions $v_{i}(z).$ We will denote by $F_{\tau}$
the corresponding locally defined flow. When $X$ is compact this
means that $F_{\tau}$ is the globally defined holomorphic map from
$\C\times X$ to $X$ uniquely determined by
\begin{equation}
\frac{\partial}{\partial\tau}_{|\tau=0}F_{\tau}(x)=V_{|x}^{1,0},\,\,\,\,F_{0}=I.\label{eq:derivative of flow}
\end{equation}
 In case $X$ is non-compact $F_{\tau}(x)$ is only defined in a neighborhood
of a given $(0,x)\in\C\times X.$ Given a holomorphic line bundle
$L\rightarrow X$ a vector field $V^{1,0}$ is said to \emph{lift
holomorphically to $L$} if there exists a holomorphic vector field
on the total space of $L$ whose local flow is $\C^{*}-$equivariant
and covers the local flow $F_{\tau}$ of $V^{1,0}$ on $X.$ Abusing
notation slightly we will use the same notation $F_{\tau}$ for the
lifted flow on the total space of $L.$ 

Given a complex vector field $V$ we will denote by $\delta_{V}$
the corresponding contraction operator (interior product) acting on
forms: if $\alpha$ is a $p-$form, then $\delta_{V}\alpha$ is the
$p-1-$form obtained by plugging in $V$ into the first argument of
$\alpha.$ 

\subsubsection{Metrics on line bundles }

We will use additive notation for metrics. This means that we identify
an Hermitian metric $\left\Vert \cdot\right\Vert $ on $L$ with a
collection of local functions $\phi_{U}$ associated to a given covering
of $X$ by open subsets $U$ and trivializing holomorphic sections
$e_{U}$ of $L\rightarrow U:$ 
\[
\phi_{U}:=-\log(\left\Vert e_{U}\right\Vert ^{2}),
\]
 which defines a function on $U.$ Of course, the functions $\phi_{U}$
on $U$ do not glue to define a global function on $X,$ but the two-form
$\partial\bar{\partial}\phi_{U}$ is globally well-defined and coincides
with the curvature form of $\left\Vert \cdot\right\Vert $. Accordingly,
as is customary, we will symbolically denote by $\phi$ a given Hermitian
metric on $L$ and by $i\partial\bar{\partial}\phi$ its curvature
form. More generally, we will allow the metric to be singular in the
sense that we only demand that $\phi_{U}\in L_{loc}^{1}.$ This ensures
that $i\partial\bar{\partial}\phi$ is a well-defined current on $X.$
In other words, fixing a smooth metric $\phi_{0}$ of $L$ we will
assume that that $\phi-\phi_{0}\in L^{1}(X)$ (but we do not assume
that $i\partial\bar{\partial}\phi$ is a positive current). Given
a (possibly singular) metric on $L$ and a global holomorphic section
$s$ of $L,$ i.e. $s\in H^{0}(X,L)$ we have
\[
\left\Vert s\right\Vert ^{2}=\left|s_{U}\right|^{2}e^{-\phi_{U}},
\]
 where $s_{U}$ is the local holomorphic function corresponding to
$s,$ $s_{U}:=s/e_{U}.$ Abusing notation slightly, the previous expression
shall simply be denoted by $\left|s\right|^{2}e^{-\phi}.$ 
\begin{example}
A global holomorphic section $s$ on $L$ induces a singular metric
$\phi$ on $L$ locally defined by $\phi_{U}:=\log(\left|s/e_{U}\right|^{2}).$
Accordingly, the singular metric on $L$ shall be denoted by $\log(|s|^{2}),$
abusing notation slightly. By the Poincaré-Lelong formula its curvature
current is the current of integration along the regular part of the
zero-locus of $\Delta_{s}$ of $s$ in $X$ (including multiplicities)
and thus defines a positive current. More generally, a meromorphic
section of $L$ also induces a singular metric on $L,$ whose curvature
is the current defined by $\Delta_{s}$ (including multiplicities,
defined to be negative for poles of $s$) which is positive iff $s$
is holomorphic.
\end{example}

If $F$ is a $\C^{*}-$equivariant holomorphic self-map of $L$ then
$F$ acts on metrics by pull-back, which in the present additive notation
means that
\[
(F^{*}\phi)_{U}(x):=-\log\left(\left\Vert (Fs_{x})\right\Vert ^{2}(Fx)\right)
\]
It should be stressed that, in general, $(F^{*}\phi)_{U}$ does not
coincide with $F^{*}(\phi_{U})$ i.e. the pull-back of the local function
$\phi_{U}.$ However, this is the case if the trivializing section
$s$ is\emph{ invariant under $F,$} i.e. if $(Fs_{x})(Fx)=s_{F(x)}.$

\subsubsection{Metrics on $-K_{X},$ the corresponding measures on $X$ and anti-canonical
klt divisors.}

Given a smooth metric $\phi$ on $-K_{X}$ we will use the symbolic
notation $e^{-\phi}$ for the corresponding measure on $X,$ defined
as follows. Given local holomorphic coordinates $z$ on $U\subset X$
denote by $e_{U}$ the corresponding trivialization of $-K_{X},$
i.e. $e_{U}=\partial/\partial z_{1}\wedge\cdots\wedge\partial/\partial z_{n}.$
The metric $\phi$ on $-K_{X}$ induces, as in the previous section,
a function $\phi_{U}$ on $U$ and the measure in question is defined
by
\[
e^{-\phi_{U}}i^{n^{2}}dz\wedge d\bar{z},\,\,\,\,\,dz:=dz_{1}\wedge\cdots\wedge dz_{n},
\]
on $U,$ which glues to define a global measure on $X.$ In the case
when $\phi$ is a singular metric on $-K_{X}$ the corresponding measure
$e^{-\phi}$ is a globally well-defined measure on $X$ as long as
$e^{-\phi_{U}}\in L_{loc}^{1}.$ In particular, if $s_{k}$ is a holomorphic
section of $-kK_{X}$ then one obtains a measure on $X,$ symbolically
denoted by $|s_{k}|^{-2/k},$ if $|s_{k}|^{-2/k}$ is in $L_{loc}^{1}.$
If this is the case then the corresponding anti-canonical divisor
$\Delta$ on $X$ (i.e. $k^{-1}$ times the zero-locus of $s_{k})$
is said to be\emph{ klt. }By the well-known algebraic characterization
of klt divisors this is, in fact, equivalent to the condition that
the measure $|s_{k}|^{-2/k}$ is in $L_{loc}^{p}$ for some $p>1$
\cite{ko0}. 
\begin{rem}
\label{rem:lct}In general, if $\Delta$ is an effective divisor on
a complex manifold $X,$ cut out by a holomorphic section $s$ of
a line bundle $L\rightarrow X,$ then its\emph{ log canonical threshold}
$\text{lct \ensuremath{(\Delta)}}$may be analytically defined as
the sup over all $t\in]0,\infty[$ such that, locally, $|s|^{-2t}\in L^{1}.$
The definition of $\text{lct \ensuremath{(\Delta)}}$ is extended
to $\Q-$divisor by imposing linearity and a $\Q-$divisor $\Delta$
is said to be\emph{ klt }if $\text{lct \ensuremath{(\Delta)}}>1$
\cite{ko0}. 
\end{rem}

\subsubsection{Lifts to $\pm K_{X}$ and tensor products }

Any holomorphic vector field on $X$ admits a canonical lift to $K_{X}$
using that the flow $F_{\tau}$ on $X$ naturally acts by pull-back
on forms on $X.$ By duality, it also lifts to $-K_{X}.$ If $L_{i}$
is a collection of $r$ holomorphic line bundles over $X$ and $F^{(i)}$
are $\C^{*}-$equivariant holomorphic maps from $L_{i}$ to $L_{i}$
(thus covering one and the same holomorphic self-map $F$ of $X)$
then one naturally obtains a $\C^{*}-$equivariant holomorphic $\C^{*}-$equivariant
self-map on the line bundle over $X$ defined as the tensor product
$L_{1}+\cdots+L_{r},$ by locally demanding that $F$ preserves tensor
products. 

\subsection{\label{subsec:Gibbs-stability-and}Gibbs stability and invariant
anti-canonical divisors}

Let $X$ be a compact complex manifold and set 
\[
N_{k}:=\dim_{\C}H^{0}(X,-kK_{X}).
\]
Let $\det S^{(k)}$ be the holomorphic section of $-kK_{X^{N_{k}}}\rightarrow X^{N_{k}}$
defined by 
\begin{equation}
(\det S^{(k)})(x_{1},x_{2},...,x_{N_{k}}):=\det(s_{i}^{(k)}(x_{j})),\label{eq:def of Slater text}
\end{equation}
 in terms of a given basis $s_{1}^{(k)},...,s_{N_{k}}^{(k)}$ in $H^{0}(X,-kK_{X}).$
We will denote by $\mathcal{D}_{k}$ the anti-canonical divisor $\Q-$divisor
on $X^{N_{k}}$ cut out by the $k$ th root of $\det S^{(k)},$ i.e.
$\mathcal{D}_{k}$ is defined as $k^{-1}$ times the zero-locus of
$\det S^{(k)},$ including multiplicities. 
\begin{defn}
A complex manifold $X$ is said to be \emph{Gibbs stable at level
$k$ }if the anti-canonical divisor $\mathcal{D}_{k}$ on $X^{N_{k}}$
is klt. 
\end{defn}

We will adopt the analytic characterization of the klt condition,
recalled in Section \ref{subsec:Setup}. It amounts to the condition
that the measure on $X^{N_{k}}$ induced by $\det S^{(k)},$ symbolically
denoted by $\left|\det S^{(k)}\right|^{-2/k}$, is in $L_{loc}^{p}$
for some $p>1.$

It should be stressed that the divisor $\mathcal{D}_{k}$ is canonically
attached to $X,$ as made precise by the following 
\begin{lem}
\label{lem:The-zero-locus-of det section}The zero-locus of $(\det S^{(k)})$
in $X^{N_{k}}$ is independent of the choice of basis in $H^{0}(X,-kK_{X})$
and, as a consequence, it is preserved by any biholomorphism $F$
of $X$ acting diagonally on $X^{N_{k}}.$ 
\end{lem}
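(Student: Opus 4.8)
The plan is to prove the two assertions in turn, the second following from the first together with the naturality of the canonical lift of a biholomorphism to $-kK_X$ discussed in Section~\ref{subsec:Setup}.

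\emph{Basis-independence.} First I would record how the Slater section transforms under a change of basis. Let $\tilde{s}_{i}^{(k)}=\sum_{l}A_{il}s_{l}^{(k)}$ be a second basis of $H^{0}(X,-kK_{X})$, with $A\in\mathrm{GL}_{N_{k}}(\C)$. Fixing local holomorphic coordinates near each point and trivializing $-kK_{X}$, the matrix of values $(\tilde{s}_{i}^{(k)}(x_{j}))$ is obtained from $(s_{i}^{(k)}(x_{j}))$ by left multiplication by $A$. Hence, as sections of $-kK_{X^{N_{k}}}$,
\[
\det\tilde{S}^{(k)}=(\det A)\,\det S^{(k)},\qquad\det A\in\C^{*}.
\]
Since the scalar $\det A$ never vanishes, the zero-loci of $\det\tilde{S}^{(k)}$ and $\det S^{(k)}$ coincide, which is the first claim. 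Invariantly, $\det S^{(k)}$ is, up to a nonzero scalar, the image of $s_{1}^{(k)}\wedge\cdots\wedge s_{N_{k}}^{(k)}$ under the natural map from $\Lambda^{N_{k}}H^{0}(X,-kK_{X})$ to $H^{0}(X^{N_{k}},-kK_{X^{N_{k}}})$, and the target line $\Lambda^{N_{k}}H^{0}(X,-kK_{X})$ carries no basis-dependence beyond scaling.

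\emph{Invariance under biholomorphisms.} Next I would use that a biholomorphism $F$ of $X$ lifts canonically to $-kK_{X}$, inducing a linear automorphism $F^{*}$ of $H^{0}(X,-kK_{X})$; in particular $\{F^{*}s_{i}^{(k)}\}$ is again a basis. Write $F^{\times N_{k}}$ for the diagonal action on $X^{N_{k}}$ and $F^{\sharp}$ for the fiberwise isomorphism $(-kK_{X})_{F(x)}\to(-kK_{X})_{x}$ furnished by the lift. Applying $F^{\sharp}$ columnwise to the matrix $(s_{i}^{(k)}(F(x_{j})))$ turns it into $((F^{*}s_{i}^{(k)})(x_{j}))$, and taking determinants (equivalently, applying the induced lift on $-kK_{X^{N_{k}}}$) yields
\[
(F^{\times N_{k}})^{\sharp}\bigl[(\det S^{(k)})(F(x_{1}),\dots,F(x_{N_{k}}))\bigr]=(\det S_{F}^{(k)})(x_{1},\dots,x_{N_{k}}),
\]
where $S_{F}^{(k)}$ is the Slater section formed from the basis $\{F^{*}s_{i}^{(k)}\}$. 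As $(F^{\times N_{k}})^{\sharp}$ is a fiber isomorphism, the left-hand side vanishes exactly when $(\det S^{(k)})(F(x_{1}),\dots,F(x_{N_{k}}))$ does. Thus the preimage under $F^{\times N_{k}}$ of the zero-locus of $\det S^{(k)}$ is the zero-locus of $\det S_{F}^{(k)}$, which by the first part is again the zero-locus of $\det S^{(k)}$. Hence the zero-locus is preserved by $F^{\times N_{k}}$.

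The hard part will be purely bookkeeping rather than conceptual: the entries $s_{i}^{(k)}(x_{j})$ are not scalars but take values in the fibers $(-kK_{X})_{x_{j}}$, so both the change-of-basis identity and the pullback identity must be carried out while tracking the line-bundle structure, either by fixing local trivializations or invariantly through the map on $\Lambda^{N_{k}}H^{0}$. Once one verifies that the canonical lift $F^{\sharp}$ intertwines the two Slater sections as above, the statement reduces to the elementary fact that a determinant scales by $\det A$ under an invertible linear change of its spanning sections.
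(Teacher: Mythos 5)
Your proof is correct and takes essentially the same route as the paper's: basis-independence via the scaling $\det\tilde{S}^{(k)}=(\det A)\,\det S^{(k)}$ with $\det A\in\C^{*}$, followed by invariance under $F$ by noting that the canonical lift of $F$ to $-kK_{X}$ identifies the pullback of $\det S^{(k)}$ under the diagonal action with the Slater section built from the pulled-back basis, so the claim reduces to the first step. (The paper additionally sketches an alternative geometric argument, characterizing the zero-locus as the configurations where the evaluation map $s\mapsto(s(x_{1}),\dots,s(x_{N_{k}}))$ fails to be injective, but its main proof is the one you gave.)
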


\begin{proof}
Changing the basis in $H^{0}(X,-kK_{X})$ has the effect of multiplying
the corresponding determinant section $(\det S^{(k)})$ of $-K_{X^{N_{k}}}\rightarrow X^{N_{k}},$
by a non-zero complex number, namely the determinant of the change
of basis matrix. Thus the zero-locus of $(\det S^{(k)})$ is not altered.
Next, if $F$ is a biholomorphism of $X$ we also denote by $F$ its
canonical lift to $-kK_{X}.$ It induces an action on $H^{0}(X,-kK_{X})$
and a biholomorphism of $X^{N_{k}},$ defined by 
\[
(F\cdot s)(x):=F^{-1}s(Fx),\,\,\,\,F(x_{1},...,x_{N_{k}})=(F(x_{1}),...,F(x_{N_{k}})),
\]
 respectively. Accordingly, $F\cdot(\det S^{(k)})$ coincides with
the determinant section defined with respect to the basis $F\cdot s_{1}^{(k)},...,F\cdot s_{N_{k}}^{(k)}$
of $H^{0}(X,-kK_{X}).$ By the previous step this shows that the zero-locus
of $\det S^{(k)}$ is preserved by the diagonal action on $X^{N_{k}}$
of $F.$ An alternative geometric proof of the lemma may be obtained
by noting that the following geometric description of the zero-locus
of $\det S^{(k)}(x_{1},...x_{N_{k}})$ holds: 
\[
\det S^{(k)}(x_{1},...x_{N_{k}})=0\iff\,\exists s_{k}\in H(X,-kK_{X}):\,\,s_{k}(x_{i})=0,\,\,s_{k}\not\equiv0.
\]
Indeed, the condition $\det S^{(k)}(x_{1},...x_{N_{k}})=0$ is equivalent
to the non-injectivity of the linear map from $H^{0}(X,-kK_{X})$
to $L_{x_{1}}\oplus...\oplus L_{x_{N_{k}}}$ defined by $s\mapsto(s(x_{1}),...,s(x_{N})).$
\end{proof}
In order to prove Theorem \ref{thm:gibbs intro} (i.e. that Gibbs
stability of $X$ with $-K_{X}$ big, at some level $k,$ implies
that $X$ admits no non-trivial holomorphic vector fields) it will
be enough to prove the following proposition:
\begin{prop}
\label{prop:invariant klt divisors}Let $X$ be a compact complex
manifold such that $-K_{X}$ is big and $V^{1,0}$ a non-trivial holomorphic
vector field on $X$ whose flow $F_{\tau}$ preserves an analytic
subvariety of $X$ of codimension one. Then the analytic subvariety
is not the support of an anti-canonical klt divisor $D.$ Equivalently,
if $F_{\tau}$ preserves an anti-canonical divisor $D,$ then $D$
is not klt.
\end{prop}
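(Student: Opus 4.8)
The plan is to reduce the statement to the vanishing result of Theorem \ref{thm:mu intro}; the whole point is that an invariant anti-canonical klt divisor manufactures an invariant measure with density in $L^p_{loc}$ for some $p>1$. I would prove the equivalent (contrapositive) form: assuming the flow $F_{\tau}$ preserves an anti-canonical klt divisor $D$, I show $V^{1,0}\equiv 0$, contradicting non-triviality. First, clearing denominators, choose $k$ so that $kD$ is an integral divisor and fix a holomorphic section $s\in H^{0}(X,-kK_{X})$ with $\mathrm{div}(s)=kD$; such an $s$ exists and is unique up to a nonzero constant because $\mathcal{O}(kD)\cong -kK_{X}$. By the analytic characterization of the klt condition recalled in Section \ref{subsec:Setup}, the hypothesis that $D$ is klt says precisely that the measure $\mu:=|s|^{-2/k}$ on $X$ lies in $L^p_{loc}$ for some $p>1$; in particular $\mu$ is a finite nonzero measure on the compact manifold $X$.

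Next I would upgrade the invariance of the subvariety to an invariance of $s$ up to scale. Since $F_{0}=\mathrm{Id}$, the map $\tau\mapsto F_{\tau}$ is continuous, and the support of $D$ has only finitely many irreducible components, the flow must fix each component; hence $F_{\tau}^{*}D=D$ as divisors for every $\tau$. Using the canonical lift of $V^{1,0}$ (and hence of $F_{\tau}$) to $-kK_{X}$ from Section \ref{subsec:Setup}, the pulled-back object $F_{\tau}^{*}s$ is again a holomorphic section of $-kK_{X}$, and its divisor is $F_{\tau}^{*}(kD)=kD=\mathrm{div}(s)$. Therefore $F_{\tau}^{*}s$ and $s$ differ by a nowhere-vanishing holomorphic function on the compact manifold $X$, i.e. by a constant: $F_{\tau}^{*}s=c(\tau)s$ with $c(\tau)\in\C^{*}$ depending holomorphically on $\tau$ and $c(0)=1$.

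I then translate this into the action on $\mu$. Writing $\mu=e^{-\phi}$ for the singular metric $\phi=\frac{1}{k}\log|s|^{2}$ on $-K_{X}$, the canonical lift to $-K_{X}$ is normalized exactly so that the geometric pull-back of the induced measure under the biholomorphism $F_{\tau}$ is the measure attached to the pulled-back metric; combined with $F_{\tau}^{*}s=c(\tau)s$ this yields $F_{\tau}^{*}\mu=|c(\tau)|^{-2/k}\mu$. Since a biholomorphism preserves total mass, $\int_{X}\mu=\int_{X}F_{\tau}^{*}\mu=|c(\tau)|^{-2/k}\int_{X}\mu$, and as $0<\int_{X}\mu<\infty$ this forces $|c(\tau)|=1$ for all $\tau\in\C$. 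A holomorphic map $c:\C\to\C$ with $|c|\equiv 1$ and $c(0)=1$ is identically $1$ (otherwise its image would be open by the open mapping theorem, yet contained in the unit circle). Hence $F_{\tau}^{*}\mu=\mu$: the flow preserves the measure $\mu$, whose density lies in $L^p_{loc}$ for some $p>1$. Since $-K_{X}$ is big, Theorem \ref{thm:mu intro} gives $V^{1,0}\equiv 0$, the desired contradiction.

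The step demanding the most care is the identity $F_{\tau}^{*}\mu=|c(\tau)|^{-2/k}\mu$. It rests on matching the \emph{geometric} pull-back of $e^{-\phi}$ under the diffeomorphism $F_{\tau}$ with the measure determined by the \emph{metric} pull-back $F_{\tau}^{*}\phi$ via the canonical lift to $-K_{X}$ — precisely the distinction flagged in Section \ref{subsec:Setup}, where it is noted that $(F^{*}\phi)_{U}$ need not equal $F^{*}(\phi_{U})$ unless the trivializing section is $F$-invariant. One must check that the canonical lift is normalized so that these two operations agree on the induced top-degree density; granting this, $F_{\tau}^{*}\phi=\phi+\frac{1}{k}\log|c(\tau)|^{2}$ is immediate from $F_{\tau}^{*}s=c(\tau)s$, and the claimed scaling drops out. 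Everything else is a routine application of the maximum principle together with the cited vanishing theorem.
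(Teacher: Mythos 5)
Your proof is correct and takes essentially the same approach as the paper: represent $D$ as $k^{-1}$ times the zero-locus of a section $s_{k}\in H^{0}(X,-kK_{X})$, deduce from invariance of the divisor that $F_{\tau}^{*}s_{k}=c(\tau)s_{k}$ for a non-vanishing scalar, and apply Theorem \ref{thm:mu intro} to the resulting invariant measure with $L_{loc}^{p}$ density, $p>1$. The only (immaterial) difference is bookkeeping of the scalar: the paper normalizes $\left|s_{k}\right|^{-2/k}$ to a probability measure so that $c(\tau)$ cancels automatically, whereas you keep the unnormalized measure and show $|c(\tau)|=1$ by conservation of total mass under the biholomorphism $F_{\tau}$ (which already suffices, so your open-mapping argument that $c\equiv1$ is superfluous).
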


Indeed, if $V^{1,0}$ is a holomorphic vector field on $X,$ then
we ca apply the previous proposition to the induced diagonal flow
on $X^{N_{k}},$ which, by the previous lemma, preserves the divisor
$\mathcal{D}_{k}$ on $X^{N_{k}}.$ In turn, the previous proposition
follows from Theorem \ref{thm:mu intro}, stated in the introduction,
applied to the probability measure 
\[
\mu=\frac{|s_{k}|^{-2/k}}{\int_{X}|s_{k}|^{-2/k}},
\]
 where we have represented the $\Q-$divisor $D$ as the $k^{-1}$
times the zero-locus (including multiplicities) of a holomorphic section
$s_{k}$ of some power $k$ th tensor power $-kK_{X}$ and $\mu$
denotes the corresponding measure (see Section \ref{subsec:Setup}).
Indeed, since the zero-locus $s_{k}$ is invariant under $F_{\tau}$
we have $F_{\tau}\cdot s_{k}=\rho(\tau)s_{k}$ for some non-vanishing
function $\rho(\tau)$ on $\C$ and hence $F_{k}^{*}\mu=\mu.$ The
proof of Theorem \ref{thm:mu intro} is given in the following section. 

\subsection{A vanishing result for holomorphic vector fields preserving singular
metrics}

Recall that a holomorphic line bundle $L$ over a compact complex
manifold $X$ is said to be \emph{big} if the space $H^{0}(X,kL)$
has maximal asymptotic growth:
\[
\dim H^{0}(X,kL)\geq Ck^{\dim X}
\]
 for some positive constant $C$ (the converse inequality always holds).
In analytic terms this equivalently means that $L$ admits a (possibly
singular) metric whose curvature current is strictly positive (i.e.
it is bounded from below by a smooth Hermitian metric on $X).$ The
following result contains, in particular, Theorem \ref{thm:mu intro}
(by letting $\phi$ be the metric on $-K_{X}$ induced by the measure
$\mu).$ 
\begin{thm}
\label{thm:sing metric Fano}Let $X$ be a compact complex manifold
$X$ and $V^{1,0}$ a holomorphic vector field on $X.$ Assume that
the flow of $V^{1,0}$ preserves some (possibly singular) metric $\phi$
on $-K_{X},$ i.e. $F_{\tau}^{*}\phi=\phi.$ If $e^{-p\phi_{U}}$
is locally integrable for some $p>1,$ then the flow of $V^{1,0}$
acts trivially on the complex vector space $H^{0}(X,-kK_{X})$ for
any positive integer $k.$ In particular, if $-K_{X}$ is big, then
$V^{1,0}$ is trivial, i.e. vanishes identically on $X.$ 
\end{thm}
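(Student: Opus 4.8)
The plan is to show first that the flow acts trivially on each finite-dimensional space $V_{k}:=H^{0}(X,-kK_{X})$, and then to invoke bigness of $-K_{X}$ to pass from there to the vanishing of $V^{1,0}$. Since $X$ is compact the flow $F_{\tau}$ is defined for all $\tau\in\C$ and the induced action on $V_{k}$ is a holomorphic one-parameter group, so $\tau\mapsto F_{\tau}\cdot s$ is an entire $V_{k}$-valued curve for each $s\in V_{k}$. My strategy is to exhibit a continuous, flow-invariant, positively homogeneous functional on $V_{k}$ that is finite and strictly positive off the origin; once this is in hand, every orbit is confined to a bounded level set, hence is constant by Liouville, which is exactly triviality of the action.

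The naive candidate is the $L^{2}$-norm $\int_{X}|s|^{2}e^{-k\phi}\,e^{-\phi}$ attached to the invariant Hermitian metric $k\phi$ on $-kK_{X}$ and the invariant measure $\mu:=e^{-\phi}$; this is manifestly $F_{\tau}$-invariant, but its finiteness would require $e^{-(k+1)\phi_{U}}\in L_{loc}^{1}$, which is far stronger than the hypothesis $e^{-p\phi_{U}}\in L_{loc}^{1}$ for large $k$. The key idea --- and the step I expect to be the crux --- is to replace the square by the fractional power $\alpha:=(p-1)/k>0$ and set
\[
J(s):=\int_{X}\bigl(|s|^{2}e^{-k\phi}\bigr)^{\alpha}\,e^{-\phi}.
\]
Writing $h_{s}:=|s|^{2}e^{-k\phi}$ for the pointwise squared norm of $s$ in the metric $k\phi$, locally one has $J(s)=\int|s_{U}|^{2\alpha}e^{-(k\alpha+1)\phi_{U}}i^{n^{2}}dz\wedge d\bar{z}$, and the exponent is exactly $k\alpha+1=p$; since $|s_{U}|^{2\alpha}$ is bounded on compacta and $e^{-p\phi_{U}}\in L_{loc}^{1}$, a finite subcover gives $J(s)<\infty$ for every $s$, while positivity of the densities gives $J(s)>0$ for $s\neq0$. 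Thus the fractional power is chosen precisely so that the invariant functional lands in the borderline integrability allowed by the hypothesis.

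Invariance of $J$ rests on two facts. First, because $F_{\tau}^{*}\phi=\phi$, the canonical lift of $F_{\tau}$ to $-kK_{X}$ is an isometry for $k\phi$, so the pointwise norm transforms as a function, $h_{F_{\tau}\cdot s}=h_{s}\circ F_{\tau}$. Second, the same invariance of $\phi$ makes the canonically associated measure invariant, $F_{\tau}^{*}\mu=\mu$, so that $\int_{X}(g\circ F_{\tau})\,d\mu=\int_{X}g\,d\mu$. Combining these with $g=h_{s}^{\alpha}$ yields $J(F_{\tau}\cdot s)=J(s)$ for all $\tau$ and all $s$. Since $J$ is homogeneous of the positive degree $2\alpha$ and continuous, each level set $\{J=c\}$ is bounded; the entire orbit $\tau\mapsto F_{\tau}\cdot s$ therefore lies in a bounded set and is constant, proving $F_{\tau}\cdot s=s$. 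This gives triviality of the action on $V_{k}$ for every $k$.

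Finally, to deduce $V^{1,0}\equiv0$ when $-K_{X}$ is big, I would use that for $k$ large the sections of $-kK_{X}$ define a bimeromorphism of $X$ onto its image in $\P(V_{k}^{*})$. Triviality of the flow on $V_{k}$ means the associated Kodaira map intertwines $F_{\tau}$ with the identity, so $V^{1,0}$ is tangent to its fibers; as these fibers are generically points, $V^{1,0}$ vanishes on a dense open set and hence, being holomorphic, identically. I expect the only delicate bookkeeping to be the correct normalization of the lifted action and the verification that $J$ is genuinely controlled by the hypothesis's $L^{p}$ integrability, both of which are routine once the power $\alpha=(p-1)/k$ is fixed.
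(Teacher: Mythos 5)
Your proposal is correct and follows essentially the same route as the paper's proof: the fractional-power invariant functional (your $J$ with $\alpha=(p-1)/k$ is the paper's $\mathcal{N}_{\epsilon}$ with $\epsilon=p-1$ small), the Liouville argument on bounded level sets to trivialize the action on $H^{0}(X,-kK_{X})$, and the passage to $V^{1,0}\equiv0$ via the intertwined Kodaira map, which by bigness is generically injective for large $k$.
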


\begin{proof}
Given $\epsilon>0$ denote by $\mathcal{N}_{\epsilon}$ the function
on $H^{0}(X,-kK_{X})$ defined by 
\[
\mathcal{N}_{\epsilon}(s_{k}):=\left(\int_{X}\left(\left|s_{k}\right|^{2}e^{-k\phi}\right)^{\epsilon/k}e^{-\phi}\right)^{k/2\epsilon},
\]
 which, by assumption, defines a finite function on $H^{0}(X,-kK_{X})$
when $\epsilon$ is sufficiently small (recall that $e^{-\phi}$ denotes
the measure on $X$ naturally attached to the metric $\phi$ on $-K_{X}).$
By the invariance assumption $\mathcal{N}_{k}$ is invariant under
the flow $F_{\tau}$ of $V^{1,0},$ i.e. $F_{\tau}^{*}\mathcal{N}_{\epsilon}=\mathcal{N}_{\epsilon}.$
Thus $\Omega:=\{\mathcal{N}_{\epsilon}<1$\} is a domain in the finite
dimensional complex vector space $H^{0}(X,-kK_{X}),$ which is invariant
under the flow $F_{\tau}$ of $V^{1,0}$ and bounded (since $\mathcal{N}_{\epsilon}$
is positively one-homogeneous of degree one). But then $F_{\tau}$
must be the identity map for any $\tau.$ Indeed, for any given $w\in\Omega$
the orbit $\tau\rightarrow$ $F_{\tau}(w)$ defines a holomorphic
map from $\C$ to $\C^{N_{k}}$ which is bounded (since $\Omega$
is bounded) and thus constant, i.e. equal to $F_{0}(w)(=w).$ 

Next, assume that $-K_{X}$ is big and fix a bases $s_{1}^{(k)}(x):\cdots:s_{N_{k}}^{(k)}(x)$
in $H^{0}(X,-kK_{X})$ and consider the corresponding Kodaira map:
\[
\Phi_{k}:X\dashrightarrow\P^{N_{k}-1},\,\,\,\,x\mapsto[s_{1}^{(k)}(x):\cdots:s_{N_{k}}^{(k)}(x)],
\]
 which is a holomorphic map on the complement in $X$ of the joint
zero-locus of the bases elements $s_{i}^{(k)}.$ If $F$ is a biholomorphism
of $-K_{X},$ commuting with the $\C^{*}-$action on $-K_{X},$ then
it induces an invertible endomorphism of $H^{0}(X,-kK_{X})$ which
descends to a biholomorphism $[F]$ of $\P^{N_{k}-1}$ which intertwines
the Kodaira map $\Phi_{k},$ i.e. 
\begin{equation}
[F]\circ\Phi_{k}=\Phi_{k}\circ F.\label{eq:intertw}
\end{equation}
 In particular, if $F$ is taken as the flow $F_{\tau}$ of a holomorphic
vector field $V^{1,0}$ satisfying the assumptions in the theorem
then, by the first part of the theorem $[F_{\tau}]$ is the identity.
But then it follows from the assumption that $-K_{X}$ is big that
$F_{\tau}$ is the identity (i.e. $V^{1,0}$ is trivial). Indeed,
by Siegel's lemma \cite[Lemma 2.2.6]{m-m}), for $k$ sufficiently
large, $\Phi_{k}$ has maximal rank, which (by the implicit function
theorem) means that there exists an open subset $U$ of $X$ such
that $\Phi_{k}$ is a biholomorphism between $U$ and $\Phi_{k}(U)\subset Y.$
Since $[F_{\tau}]$ is the identity the intertwining property \ref{eq:intertw}
thus implies that $F_{\tau}$ is the identity on $U$ and hence everywhere
on $X,$ as desired.
\end{proof}

\begin{rem}
\label{rem:general type}The proof of the previous theorem can be
viewed as a variant of the proof in \cite{kob} that any complex manifold
of general type (i.e. such that $K_{X}$ is big) admits no non-trivial
holomorphic vector fields. Indeed, replacing $-K_{X}$ with $K_{X}$
and $e^{-\phi}$ with $e^{\phi}$ (which defines a measure on $X$
if $\phi$ is a metric on $K_{X})$ the corresponding function $\mathcal{N}_{\epsilon}$
is, for $\epsilon=1,$ independent of $\phi$ and thus preserved by
any holomorphic vector field. 
\end{rem}

In the case when the metric $\phi$ on $-K_{X}$ is locally of the
form 
\begin{equation}
\phi=\lambda_{i}\sum_{i=1}^{m}\log|f_{i}|+\psi\label{eq:psh plus log hol}
\end{equation}
 with $f_{i}$ holomorphic, $\lambda_{i}\in\R$ and $\psi$ plurisubharmonic
(i.e. $i\partial\bar{\partial}\psi\geq0)$ it is enough to assume
$\phi$ is in $L_{loc}^{1}$ in the previous theorem. Indeed, by the
resolution of the strong openness conjecture \cite{g-z} this implies
the $L_{loc}^{p}-$condition for some $p>1.$ To see the necessity
of the integrability assumption $e^{-\phi_{U}}\in L_{loc}^{1}$ when
$i\partial\bar{\partial}\phi\geq0$ assume that $X$ is toric (i.e.
$X$ is an equivariant compactification of the complex torus $\C^{*n}$).
Any toric variety is of Fano type and thus $-K_{X}$ is big. Let $\phi$
be the singular metric on $-K_{X}$ induced by the standard invariant
section $s$ in $H^{0}(X,-K_{X}),$ defined as the exterior product
of the generators of the action of $\C^{*n}$ on $X.$ This means
that on the dense subset $\C^{*n}\subset X$ 
\begin{equation}
s=z_{1}\frac{\partial}{\partial z_{1}}\wedge\cdots\wedge z_{n}\frac{\partial}{\partial z_{n}},\,\,\,e^{-\phi}=|s|^{-2}=|z_{1}|^{-2}\cdots|z_{n}|^{-2}\label{eq:s toric}
\end{equation}
The metric $\phi$ is preserved by the vector field $V^{1,0}$ on
$X$ induced by any element in the Lie algebra of $\C^{*n},$ but
$e^{-\phi_{U}}$ is not locally integrable. On the other hand, $e^{-\phi_{U}}$
is ``next to integrable'' in the sense that $e^{-\phi_{U}}$ is
in $L_{loc}^{p}$ for any $p<1.$ 

In the proof of Theorem \ref{thm:sing metric Fano} the fractional
power $\epsilon/k$ is needed unless $e^{-\phi}$ is in $L_{loc}^{p}$
for $p$ large and thus the proof corrects the argument in the proof
of \cite[Prop 6.5]{berm8 comma 5} where $\epsilon/k$ was taken to
be one. On the other hand the argument in \cite{berm8 comma 5} yields
the following result, which applies to any big line bundle $L:$
\begin{prop}
\label{prop:inv sing metric L}Let $L$ be a holomorphic line bundle
over a compact complex manifold $X$ and $V^{1,0}$ a holomorphic
vector field on $X$ that admits a holomorphic lift to $L.$ Assume
that the flow of $V^{1,0}$ preserves a (possibly singular) metric
$\phi$ on $L,$ i.e. $F_{\tau}^{*}\phi=\phi.$ If $e^{-k\phi_{U}}$
is locally integrable, then the flow of $V^{1,0}$ acts trivially
on the complex vector space $H^{0}(X,kL+K_{X}).$ In particular, if
$L$ is big, then there exists some integer $k$ with the following
property: if $e^{-k\phi_{U}}$ is integrable, then $V^{1,0}$ is trivial,
i.e. vanishes identically on $X.$ More precisely, $k$ can be taken
as the minimal integer with the property that the corresponding meromorphic
Kodaira map from $X$ to $H^{0}(X,kL+K_{X})$ has maximal rank (i.e.
equals the dimension of $X).$
\end{prop}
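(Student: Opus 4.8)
The plan is to run the argument of Theorem \ref{thm:sing metric Fano} almost verbatim, but on the space $H^{0}(X,kL+K_{X})$ rather than on sections of a power of $-K_{X}$, exploiting the fact that a section of $kL+K_{X}$ pairs naturally with the metric $k\phi$ on $kL$ to produce a genuine \emph{measure} on $X$ (rather than merely a function), so that no fractional power is needed --- precisely the phenomenon observed in Remark \ref{rem:general type}. Since $V^{1,0}$ lifts holomorphically to $L$ by hypothesis and always lifts canonically to $K_{X}$, its flow $F_{\tau}$ lifts to $kL+K_{X}$ and hence acts on $H^{0}(X,kL+K_{X})$. On this space I would introduce the weighted $L^{2}$-norm
\[
\left\Vert s\right\Vert ^{2}:=\int_{X}i^{n^{2}}s\wedge\bar{s}\,e^{-k\phi},
\]
which in a local trivialization $e_{U}$ of $L$ reads $\int|h_{U}|^{2}e^{-k\phi_{U}}\,i^{n^{2}}dz\wedge d\bar{z}$ with $s=h_{U}\,e_{U}^{\otimes k}\otimes dz$. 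The assumption $e^{-k\phi_{U}}\in L_{loc}^{1}$, together with the compactness of $X$ and the local boundedness of the holomorphic representatives $h_{U}$, guarantees that $\left\Vert \cdot\right\Vert$ is finite, and since $e^{-k\phi_{U}}>0$ almost everywhere (as $\phi_{U}\in L_{loc}^{1}$) it is a genuine norm.

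The first key point is that $\left\Vert \cdot\right\Vert$ is $F_{\tau}$-invariant, and this is where the twist by $K_{X}$ is essential: the assignment $s\mapsto i^{n^{2}}s\wedge\bar{s}\,e^{-k\phi}$ is a canonically defined measure on $X$, and because the lift of $F_{\tau}$ to $kL$ preserves $\phi$ (by $F_{\tau}^{*}\phi=\phi$) while the canonical lift to $K_{X}$ makes $s\wedge\bar{s}$ transform as a top-degree form, the measure attached to $F_{\tau}\cdot s$ is exactly the transport under $F_{\tau}$ of the measure attached to $s$; integrating over the $F_{\tau}$-invariant manifold $X$ then gives $\left\Vert F_{\tau}\cdot s\right\Vert=\left\Vert s\right\Vert$. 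Consequently $\Omega:=\{\left\Vert s\right\Vert<1\}$ is a bounded, $F_{\tau}$-invariant domain in the finite-dimensional space $H^{0}(X,kL+K_{X})$, and the Liouville argument of Theorem \ref{thm:sing metric Fano} applies verbatim: for each $w\in\Omega$ the orbit $\tau\mapsto F_{\tau}(w)$ is a bounded entire curve, hence constant, so $F_{\tau}$ acts trivially on $H^{0}(X,kL+K_{X})$ for every $k$ with $e^{-k\phi_{U}}\in L_{loc}^{1}$.

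For the final assertion, suppose $L$ is big. Writing $L$ numerically as an ample plus an effective class shows that $kL+K_{X}$ is big for all large $k$, so --- just as for $-kK_{X}$ in Theorem \ref{thm:sing metric Fano} --- by Siegel's lemma \cite[Lemma 2.2.6]{m-m} there is a minimal integer $k$ for which the Kodaira map $\Phi_{k}\colon X\dashrightarrow\P(H^{0}(X,kL+K_{X})^{*})$ attains maximal rank $\dim X$. For this $k$, assuming $e^{-k\phi_{U}}$ integrable, the first part gives that $F_{\tau}$ acts trivially on $H^{0}(X,kL+K_{X})$, whence the induced projective transformation $[F_{\tau}]$ is the identity. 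The intertwining relation \ref{eq:intertw}, namely $[F_{\tau}]\circ\Phi_{k}=\Phi_{k}\circ F_{\tau}$, together with the fact that $\Phi_{k}$ restricts to a biholomorphism onto its image on some open set $U\subset X$, then forces $F_{\tau}$ to be the identity on $U$, hence on all of $X$ by analyticity; that is, $V^{1,0}\equiv0$.

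The main obstacle I expect is the invariance computation for $\left\Vert \cdot\right\Vert$: one must check carefully that the canonical lift of the flow to $K_{X}$ absorbs the Jacobian arising in the change of variables $x\mapsto F_{\tau}(x)$, so that, combined with $F_{\tau}^{*}\phi=\phi$, the measure $i^{n^{2}}s\wedge\bar{s}\,e^{-k\phi}$ is transported exactly and its total mass is preserved. This is precisely the step where passing from $kL$ to $kL+K_{X}$ (rather than working with $\left|s\right|^{2}e^{-k\phi}$ as a mere function, as one is forced to do in the $-K_{X}$ setting) is indispensable, and it is what allows the exponent to be taken equal to $k$ rather than an auxiliary small $\epsilon$.
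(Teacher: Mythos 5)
Your proof of the main assertion coincides with the paper's: the paper introduces the positively one-homogeneous function $\mathcal{N}_{k}(s_{k}):=\left(\int_{X}\left|s_{k}\right|^{2}e^{-k\phi}\right)^{1/2}$ on $H^{0}(X,kL+K_{X})$ --- exactly your weighted $L^{2}$-norm, using that a section of $kL+K_{X}$ paired with $e^{-k\phi}$ yields a measure --- notes its invariance under $F_{\tau},$ and concludes with the same bounded-orbit Liouville argument; your observation that the twist by $K_{X}$ is what permits the exponent $k$ (instead of the auxiliary $\epsilon/k$ of Theorem \ref{thm:sing metric Fano}) is precisely the paper's point. (The paper also records an alternative ending: since $\mathcal{N}_{k}$ is an $L^{2}$-norm, the generator of the flow of the real part of $V^{1,0}$ on $H^{0}(X,kL+K_{X})$ is both Hermitian and anti-Hermitian, hence zero.)

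The genuine gap is in your treatment of the case when $L$ is big. You justify bigness of $kL+K_{X}$ for large $k$ by ``writing $L$ numerically as an ample plus an effective class.'' That is Kodaira's lemma, and it requires $X$ to be projective. The proposition, however, is stated for an arbitrary compact complex manifold $X$: a compact complex manifold carrying a big line bundle is Moishezon but need not be projective, and a non-projective Moishezon manifold admits no ample classes at all (it is not even K\"ahler), so the decomposition you invoke does not exist. This is exactly the step where the paper argues differently: it uses the characterization of bigness by a singular metric with analytic singularities and strictly positive curvature current \cite[Thm 2.3.30]{m-m} together with Bonavero's singular holomorphic Morse inequalities \cite[Cor 2.3.26]{m-m}, which are valid on any compact complex manifold and apply to $kL+E$ for an arbitrary fixed line bundle $E,$ in particular $E=K_{X}.$ This yields the growth $\dim H^{0}(X,kL+K_{X})\geq ck^{\dim X}$ (the paper's inequality \ref{eq:lower bd on dim}), and then Siegel's lemma \cite[Lemma 2.2.6]{m-m} produces a $k_{0}$ for which the Kodaira map of $H^{0}(X,k_{0}L+K_{X})$ has maximal rank; from there your intertwining argument agrees with the paper's. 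Your route can be completed when $X$ is projective (ample plus effective does give the required growth of $\dim H^{0}(X,kL+K_{X})$ in $k$), but in the stated generality the Morse-inequality input cannot be bypassed.
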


\begin{proof}
Let $\mathcal{N}_{k}$ be the positively one-homogeneous function
on $H^{0}(X,kL+K_{X})$ defined by 
\[
\mathcal{N}_{k}(s_{k}):=\left(\int_{X}\left|s_{k}\right|^{2}e^{-k\phi}\right)^{1/2},
\]
 which, by the integrability assumption, defines a finite function
on $H^{0}(X,kL+K_{X}).$ Here $\left|s_{k}\right|^{2}e^{-kl\phi}$
denotes the measure on $X$ naturally attached to the section $s_{k}$
of $kL+K_{X}$ and the metric $\phi$ on $L$ using that $s_{k}$
may be identified with a holomorphic top form with values in $kL.$
By the invariance assumption $\mathcal{N}_{k}$ is invariant under
the flow $F_{\tau}$ of $V^{1,0},$ i.e. $F_{\tau}^{*}\mathcal{N}_{k}=\mathcal{N}_{k}.$
One can then conclude precisely as before. Alternatively, in this
case, exploiting that $\mathcal{N}_{k}$ is an $L^{2}-$norm, one
can also conclude using the following trivial fact, applied to the
generator of the flow on $H^{0}(X,kL+K_{X})$ of the real part of
$V^{1,0}:$ if a matrix is both Hermitian and anti-Hermitian, then
it vanishes identically.

Next, assume that $L$ is big. Then there exists a positive constant
$c$ such that 
\begin{equation}
\dim H^{0}(X,kL+K_{X})\geq ck^{\dim X}\label{eq:lower bd on dim}
\end{equation}
Indeed, $L$ is big iff it admits a metric $\phi$ with analytic singularities
whose curvature defines a Kähler current \cite[Thm 2.3.30]{m-m} (i.e.
it is strictly positive). The lower bound \ref{eq:lower bd on dim}
then follows from Bonavero's singular holomorphic Morse inequalities
\cite[Cor 2.3.26]{m-m} (which apply to $kL+E$ for any given holomorphic
line bundle $E$). It then follows, exactly, as in the proof of the
previous theorem  that there exists $k_{0}$ such that the corresponding
Kodaira map $\Phi_{k_{0}}$ has maximal rank. This implies, just as
before, that the flow $F_{\tau}$ is trivial.
\end{proof}
The previous proposition applies, in particular, when $\phi$ is psh
and has vanishing Lelong numbers, which is equivalent to $e^{-\phi}\in L_{loc}^{p}$
for\emph{ any} $p>1.$ It should be stressed that for a general big
line bundle $L$ the weaker integrability condition in Theorem \ref{thm:sing metric Fano}
is, however, not sufficient. This is illustrated by the case when
$X$ is the simplest toric Fano manifold, $X=\P^{1},$ and $L=\mathcal{O}(1),$
identified with $-K_{X}/2,$ endowed with the metric $\phi$ induced
by the $\C^{*}-$invariant section $s$ in $-K_{X}$ (formula \ref{eq:s toric}).
Indeed, then $e^{-\phi}\in L_{loc}^{p}$ for any $p<2$ and yet the
metric $\phi$ on $L$ is invariant under the generator $V^{1,0}$
of the $\C^{*}-$ action on $X$ (since $s$ is). 

\section{\label{sec:The-second-proof}The second proof and generalized Hamiltonians}

As explained in Section \ref{subsec:Gibbs-stability-and} Theorem
\ref{thm:gibbs intro} follows from Theorem \ref{thm:sing metric Fano}.
We next show that in the case that $-K_{X}$ is big and nef on a projective
manifold $X$ and $\phi$ is assumed psh the latter theorem can be
deduced from the following result \cite[Prop 8.2]{bern}:
\begin{prop}
\label{prop:(Berndtsson).-Let-}(Berndtsson)\cite{bern}. Let $L$
be a holomorphic line bundle over a compact Kähler manifold $X$ such
that $H^{0}(X,L+K_{X})$ is non-trivial and $H^{1}(X,L+K_{X})$ is
trivial. If $V^{1,0}$ is a holomorphic vector field on $X$ such
that

\[
\delta_{V^{1,0}}\partial\bar{\partial}\phi=0,
\]
 for a (possibly singular) metric $\phi$ on $L$ with positive curvature
current $\partial\bar{\partial}\phi$ (i.e. $\phi$ is locally psh)
such that $e^{-\phi}\in L_{loc}^{1}$, then $V^{1,0}$ vanishes identically. 
\end{prop}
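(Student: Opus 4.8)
The plan is to convert the pointwise hypothesis $\delta_{V^{1,0}}\partial\bar\partial\phi=0$ into a \emph{holomorphic lift} of $V^{1,0}$ to $L$ (a ``generalized Hamiltonian''), and then to run an $L^{2}$-isometry argument on $H^{0}(X,L+K_{X})$ in the spirit of Proposition \ref{prop:inv sing metric L}, using the vanishing $H^{1}(X,L+K_{X})=0$ to upgrade the resulting infinitesimal statement all the way to $V^{1,0}\equiv 0$. First I would unwind the hypothesis in local coordinates: writing $V^{1,0}=\sum_{i}v_{i}\partial/\partial z_{i}$ with local weight $\phi_{U}$, the $(0,1)$-form $\delta_{V^{1,0}}\partial\bar\partial\phi$ has components $\sum_{j}v_{j}(\partial_{j}\partial_{\bar k}\phi_{U})$, and since the $v_{j}$ are holomorphic this is $\partial_{\bar k}(\delta_{V^{1,0}}\partial\phi_{U})$. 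Hence the hypothesis says exactly that $h_{U}:=\delta_{V^{1,0}}\partial\phi_{U}=V^{1,0}\phi_{U}$ is holomorphic on $U$. Comparing trivializations through the holomorphic transition functions $g_{UU'}$ of $L$, and using that $V^{1,0}$ kills anti-holomorphic functions, one gets $h_{U'}=h_{U}-\delta_{V^{1,0}}\partial\log g_{UU'}$; this is precisely the cocycle gluing a holomorphic lift $\tilde V$ of $V^{1,0}$ to the total space of $L$. Moreover $h_{U}+\overline{h_{U}}=(V^{1,0}+\overline{V^{1,0}})\phi_{U}$, the moment-map identity guaranteeing that the induced real flow $F_{\tau}$ on $L$ preserves the metric $\phi$.

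With this lift in hand, $\tilde V$ together with the canonical lift of $V^{1,0}$ to $K_{X}$ gives a holomorphic lift to $L+K_{X}$, so $F_{\tau}$ acts on the finite-dimensional space $H^{0}(X,L+K_{X})$. Because $F_{\tau}$ preserves $\phi$, it preserves the Hermitian form $\langle s,t\rangle:=\int_{X}\langle s,t\rangle_{\phi}$, which is finite since $e^{-\phi}\in L_{loc}^{1}$ and $X$ is compact. Consequently the generator $A:=\mathcal{L}_{\tilde V}$ is anti-Hermitian for this form, hence diagonalizable with purely imaginary eigenvalues. Using $H^{0}(X,L+K_{X})\neq 0$, I would then fix a non-zero eigensection $s$ with $\mathcal{L}_{\tilde V}s=\lambda s$ and $\lambda\in i\R$.

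The remaining task is to force $\lambda=0$ and then $V^{1,0}\equiv 0$, and this is where the vanishing $H^{1}(X,L+K_{X})=0$ and the positivity $\partial\bar\partial\phi\geq 0$ enter. The Lie--Cartan identity $\mathcal{L}_{\tilde V}s=d_{\nabla}\delta_{V^{1,0}}s+\delta_{V^{1,0}}d_{\nabla}s$ combined with $\bar\partial s=0$ expresses $\lambda s$ as $\bar\partial$ of the $L$-valued $(n-1)$-form $\delta_{V^{1,0}}s$ up to its holomorphic part; the vanishing of $H^{1}$ then produces a global primitive, so that integration by parts against $e^{-\phi}$ leaves no obstructing cohomological term. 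I expect this to yield a Bochner-type identity, schematically of the form $|\lambda|^{2}\|s\|^{2}=-\int_{X}\big(\partial\bar\partial\phi\big)(V^{1,0},\overline{V^{1,0}})\,|s|^{2}e^{-\phi}\leq 0$, where the sign is dictated by $\partial\bar\partial\phi\geq 0$. This gives $\lambda=0$ and, since $s\not\equiv 0$ and $e^{-\phi}>0$, forces $V^{1,0}$ to vanish on a non-empty open set, hence identically by analyticity.

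The hard part will be making this last integration by parts rigorous, because $\phi$ is only assumed $L_{loc}^{1}$ and its curvature is a positive \emph{current} with possible singularities. I would handle this by a standard psh-regularization, using $e^{-\phi}\in L_{loc}^{1}$ to control the integrals along the approximation, and exploiting the strict positivity available in the big and nef setting to close the estimate. The precise use of $H^{1}(X,L+K_{X})=0$, guaranteeing that the primitive of the relevant $\bar\partial$-closed form exists globally, is exactly what keeps the Stokes argument free of boundary or cohomological contributions; this cohomological input is the feature that distinguishes the present argument from the purely norm-based one in Theorem \ref{thm:sing metric Fano}, and is also why this second approach does not extend to the singular or non-effective log-pair setting.
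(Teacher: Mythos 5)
First, a point of framing: the paper does not actually prove this proposition. It is quoted from Berndtsson \cite{bern}, with only the remark that its proof rests on ``delicate integration by parts''; what the paper proves along the route you follow is the strictly weaker Proposition \ref{prop:variant of Bernd}. Your first half is sound and coincides with the paper's own machinery: turning $\delta_{V^{1,0}}\partial\bar{\partial}\phi=0$ into a holomorphic lift of $V^{1,0}$ to $L$ via the local holomorphic functions $h_{U}=\delta_{V^{1,0}}\partial\phi_{U}$ is Lemma \ref{lem:V contracting zero preserves a phi}, and the resulting isometric action on $H^{0}(X,L+K_{X})$ with respect to $\int_{X}|s|^{2}e^{-\phi}$ is the $L^{2}$ argument of Proposition \ref{prop:inv sing metric L} (from which, using both real and imaginary time, you even get that the action is trivial, not merely anti-Hermitian).

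The gap is your final step, and it is essential. Granting your schematic identity, what follows is $\lambda=0$ together with $\left(\partial\bar{\partial}\phi\right)(V^{1,0},\overline{V^{1,0}})=0$ almost everywhere on $\{s\neq0\}$; this only places $V^{1,0}$ in the kernel of the curvature current, which is allowed to be degenerate -- positivity of $\partial\bar{\partial}\phi$ as a current is not strict positivity, and your appeal to ``strict positivity available in the big and nef setting'' is inadmissible since the proposition assumes neither bigness nor nefness of $L$. Concretely, take $X$ a K3 surface and $L=\mathcal{O}_{X}$ with the flat metric $\phi=0$: then $H^{0}(X,L+K_{X})\neq0$, $H^{1}(X,L+K_{X})=0$, every holomorphic vector field satisfies $\delta_{V^{1,0}}\partial\bar{\partial}\phi=0$, and your Bochner identity reads $0=0$, yet the proposition (correctly) asserts that no nonzero holomorphic vector field exists. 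Note also that your identity is completely insensitive to the hypothesis $H^{1}(X,L+K_{X})=0$ -- the vague ``global primitive'' never enters the final inequality -- whereas the truth of the statement hinges on it: for a complex torus with $L=\mathcal{O}_{X}$ the same identity reads $0=0$ but the conclusion is false, the only difference being that there $H^{1}(X,K_{X})\neq0$. So no pointwise curvature-positivity argument can close the proof; the degenerate directions are exactly the hard case that Berndtsson's integration by parts is designed to handle. Stripped of that step, your argument only shows that the flow acts trivially on $H^{0}(X,L+K_{X})$, which, as the paper notes after Proposition \ref{prop:variant of Bernd}, suffices only when the Kodaira map of $kL+K_{X}$ has maximal rank -- a strictly stronger hypothesis than $H^{0}(X,L+K_{X})\neq0$, which permits this space to be one-dimensional, in which case triviality of the action says almost nothing about $V^{1,0}$.
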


The proof of the previous proposition is based on delicate integration
by parts. The relation to Theorem \ref{thm:sing metric Fano} stems
from the following completely local result:
\begin{lem}
\label{lem:V preserving phi contracts to zero}Let $X$ be a (possibly
non-compact) complex manifold endowed with a holomorphic line bundle
$L$ and $\phi$ a (possibly singular) metric on $L.$ If $\phi$
is preserved by a holomorphic $(1,0)-$vector field $V^{1,0},$ then
\[
\delta_{V^{1,0}}\partial\bar{\partial}\phi=0
\]
in the sense of currents.
\end{lem}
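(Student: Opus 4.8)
The plan is to show that the invariance condition $F_\tau^*\phi = \phi$, after differentiating in $\tau$ at $\tau=0$, produces a Lie-derivative identity for the curvature current, and that the $(1,1)$-part of this identity is exactly $\delta_{V^{1,0}}\partial\bar\partial\phi = 0$. Since the statement is local, I may work in a single trivializing chart $U$ with a local weight $\phi_U \in L^1_{loc}$, and I will suppress the subscript $U$.

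First I would make the invariance infinitesimal. The condition $F_\tau^*\phi = \phi$ is an identity of metrics on $L$; choosing a fixed trivializing section $e_U$ that is \emph{invariant} under the lifted flow (such a choice is available locally, shrinking $U$, since $V^{1,0}$ lifts to $L$), the discussion of pullback of metrics in the Setup shows that in this trivialization $(F_\tau^*\phi)_U = F_\tau^*(\phi_U)$, so the invariance becomes the scalar identity $F_\tau^*\phi_U = \phi_U$ of $L^1_{loc}$ functions for all small $\tau$. Differentiating at $\tau=0$ and using \eqref{eq:derivative of flow} gives the Lie-derivative equation
\[
\mathcal{L}_{V^{1,0}}\phi_U = 0
\]
in the sense of distributions, where $\mathcal{L}_{V^{1,0}} = \delta_{V^{1,0}} d + d\,\delta_{V^{1,0}}$ is Cartan's formula applied to the function $\phi_U$; since $\phi_U$ is a $0$-form this reads $\delta_{V^{1,0}} d\phi_U = V^{1,0}(\phi_U) = 0$.

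Next I would apply $\partial\bar\partial$ to this and commute it past the contraction. The key point is that $V^{1,0}$ is \emph{holomorphic}, so $\bar\partial V^{1,0}=0$; this is what makes contraction by $V^{1,0}$ commute (up to signs) with $\bar\partial$, and also lets $\partial$ pass through. Concretely, from $V^{1,0}(\phi_U)=\delta_{V^{1,0}}\partial\phi_U=0$ I would apply $\bar\partial$, yielding $\bar\partial\,\delta_{V^{1,0}}\partial\phi_U = 0$; because $V^{1,0}$ is holomorphic of type $(1,0)$ one has the Leibniz/commutation identity $\bar\partial\,\delta_{V^{1,0}} = -\delta_{V^{1,0}}\bar\partial$ on forms (the correction term involves $\bar\partial V^{1,0}$, which vanishes). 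This converts the equation into $\delta_{V^{1,0}}\bar\partial\partial\phi_U = 0$, i.e. $\delta_{V^{1,0}}\partial\bar\partial\phi_U=0$ up to sign, which is the assertion. Since $\partial\bar\partial\phi_U$ glues to the globally defined current $\partial\bar\partial\phi$, the local identity patches to the global one.

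The main obstacle is purely a regularity/distributional bookkeeping issue rather than a geometric one: $\phi_U$ is only $L^1_{loc}$, so $\mathcal{L}_{V^{1,0}}\phi_U=0$ and all subsequent manipulations must be justified as identities of currents, and the commutation rule $\bar\partial\,\delta_{V^{1,0}}=-\delta_{V^{1,0}}\bar\partial$ must be checked against a holomorphic coefficient field acting on a current. The clean way to handle this is to note that contraction by a holomorphic vector field and the operators $\partial,\bar\partial$ are all continuous in the weak topology and are local, so it suffices to verify each identity for smooth $\phi_U$ (where Cartan's formula and the Leibniz rule are classical) and then pass to the limit by regularizing $\phi_U$ via convolution; the holomorphy of $V^{1,0}$ guarantees the regularized objects still satisfy the commutation identity. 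I expect this limiting argument to be the only genuinely delicate step.
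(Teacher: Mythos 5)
Your overall strategy (differentiate the invariance in $\tau$ at $\tau=0$, then apply $\bar{\partial}$ and commute it with $\delta_{V^{1,0}}$ using holomorphy of the field) is the same as the paper's, and your closing remarks on distributional bookkeeping address a real but manageable point. The genuine gap is your very first step: the claim that, after shrinking $U$, one can choose a trivializing section $e_{U}$ that is invariant under the lifted flow. This is false at zeros of $V^{1,0}$ at which the lifted flow acts on the fiber by a non-trivial character: if $F_{\tau}$ fixes $x_{0}$ and acts on $L_{x_{0}}$ by $w\mapsto e^{c\tau}w$ with $c\neq0$, then any invariant section satisfies $e^{c\tau}s_{x_{0}}=s_{x_{0}}$ and hence vanishes at $x_{0}$, so no invariant \emph{trivializing} section exists on any neighborhood of $x_{0}$ (already for $V^{1,0}=z\partial/\partial z$ on $\C$ with lift $z\partial/\partial z+cw\partial/\partial w$). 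Consequently your argument only proves $\delta_{V^{1,0}}\partial\bar{\partial}\phi=0$ on $X\setminus\{V^{1,0}=0\}$, and this cannot be upgraded by a support argument, since a current of exactly this shape can be non-zero while supported on $\{V^{1,0}=0\}$: on $\C$ take $\phi=\mathrm{Re}(1/z)\in L_{loc}^{1}$ and $V^{1,0}=z\partial/\partial z$; then $\partial\bar{\partial}\phi$ vanishes off the origin, yet $\delta_{V^{1,0}}\partial\bar{\partial}\phi$ is a non-zero multiple of $\delta_{0}\,d\bar{z}$ because $z\,\partial_{z}\delta_{0}=-\delta_{0}\neq0$. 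Nor is this a peripheral case for the lemma: in the paper's motivating toric example (formula \ref{eq:s toric}) the invariant metric is singular precisely along divisors on which the torus vector fields vanish, so the zero set of $V^{1,0}$ is exactly where the current $\partial\bar{\partial}\phi$ lives.

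The paper closes this gap by \emph{not} normalizing the trivialization: for an arbitrary local trivializing section $s$ one writes $(F_{\tau}s_{z})(F_{\tau}z)=g\,s_{F_{\tau}z}$ with $g(z,\tau)$ holomorphic and non-vanishing, so that $((F_{\tau})^{*}\phi)_{U}=(F_{\tau})^{*}(\phi_{U})-\log g-\overline{\log g}$. Differentiating at $\tau=0$ and using the invariance gives $0=\delta_{V^{1,0}}\partial\phi_{U}+f$ with $f=-\partial_{\tau}\log g|_{\tau=0}$ holomorphic; applying $\bar{\partial}$ annihilates $f$ and yields $\delta_{V^{1,0}}\bar{\partial}\partial\phi=0$ globally, zeros of $V^{1,0}$ included. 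Your computation is precisely the special case $g\equiv1$; to repair your proof you must carry the correction term $\log g$ (equivalently, the holomorphic function $f$) through the differentiation rather than assume it can be normalized away, after which your commutation and regularization steps go through as you describe.
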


\begin{proof}
Consider the global complex-valued generalized function (i.e. current
of degree zero) on $X$ defined by
\begin{equation}
h=\frac{\partial}{\partial\tau}_{|\tau=0}(F_{\tau})^{*}\phi,\label{eq:def of h in pf}
\end{equation}
(this is indeed a globally defined generalized function since the
difference between any two metrics is a globally well-defined function
in $L^{1}(X)$). Then we can express

\begin{equation}
h=\delta_{V^{1,0}}\partial\phi+f\label{eq:h as diff plus f}
\end{equation}
 where $\bar{\partial}f=0.$ Indeed, let $g(z,\tau)$ be the local
non-vanishing holomorphic function on $X\times\C$ defined by
\[
(F_{\tau}s_{z})(F_{\tau}z)=gs_{(F_{\tau}z)}.
\]
 Then 
\[
\left\Vert (F_{\tau}s_{z})(F_{\tau}z)\right\Vert =|g(z,\tau)|\left\Vert s_{(F_{\tau}z)}\right\Vert 
\]
 and hence
\[
((F_{\tau})^{*}\phi)_{U}(z)=((F_{\tau})^{*}\phi_{U})(z)-\log g(z,\tau)-\overline{\log g(z,\tau)},
\]
 giving 
\[
\frac{\partial}{\partial\tau}((F_{\tau})^{*}\phi)_{U}(z)=\frac{\partial}{\partial\tau}(F_{\tau})^{*}\phi_{U}-\frac{\partial}{\partial\tau}\log g(z,\tau).
\]
 Evaluating this at $\tau=0$ proves formula \ref{eq:h as diff plus f}
with $f(z):=-\frac{\partial}{\partial\tau}\log g(z,\tau)$ evaluated
at $\tau=0.$ Hence, applying $\bar{\partial}$ gives 
\begin{equation}
\bar{\partial}h=\delta_{V^{1,0}}\bar{\partial}\partial\phi+0,\label{eq:dbar h in pf}
\end{equation}
(using that $\bar{\partial}$ commutes with $\delta_{V^{1,0}}$ for
a holomorphic vector field), as desired.
\end{proof}
\begin{rem}
\label{rem:vanishing in ample smooth case}Of course, the assumption
in the previous lemma do not, in general, imply that $V^{1,0}\equiv0$
(as illustrated by the case when $(L,\phi)$ is trivial). However,
if one adds the global assumption that $L$ be \emph{ample} (positive)
line bundle over a compact manifold and $\phi$ is\emph{ smooth} then
there exist some point $x_{0}$ where $\partial\bar{\partial}\phi_{|x_{0}}>0$
(by the maximum principle) and thus $i\partial\bar{\partial}\phi>0$
on a whole neighborhood $U.$ The previous proposition then forces
$V^{1,0}\equiv0$ on $U$ and thus on all of $X,$ since $V^{1,0}$
is holomorphic. However, the point of Theorem \ref{thm:sing metric Fano}
and Prop \ref{prop:inv sing metric L} is that they apply to singular
situations.
\end{rem}

If $L=-K_{X}$ the space $H^{0}(X,L+K_{X})$ is automatically non-trivial.
Moreover, $H^{1}(X,L+K_{X})$ is, in general, trivial if $L$ is big
and nef, by the Kawamata--Viehweg vanishing theorem \cite[Thm 9.1.18]{la}.
Hence, applying the previous proposition and lemma to $L=-K_{X}$
yields an alternative proof of Theorem \ref{thm:sing metric Fano}
in the case when $-K_{X}$ is big and nef on a projective manifold
$X$ and $\phi$ is psh. Note, however, that when $-K_{X}$ is merely
assumed to be big the space $H^{1}(X,-K_{X}+K_{X})$ can be non-trivial.
In other words, it can happen that $h^{0,1}(X)\neq0,$ as illustrated
by the following example.
\begin{example}
\label{exa:X surface}For a projective complex surface $X$ with $-K_{X}$
big it follows from the Castelnuovo's rationality criterion that $h^{0,1}(X)=0$
iff $X$ is rational, which, in general need not be the case. For
example, when $X$ is a ruled surface (i.e. a $\P^{1}-$bundle) over
a curve $C$ of strictly positive genus $g$ the anti-canonical line
bundle $-K_{X}$ is big if $-C\cdot C>2g-2$ \cite{s-d,o-o}. But
$h^{0,1}(X)=h^{0,1}(C)=g>0,$ since $X$ is birationally equivalent
to $\P^{1}\times C$ and $h^{0,1}(X)$ is a birational invariant. 
\end{example}

\subsection{An extension of Berndtsson's vanishing result and generalized Hamiltonians}

We next observe that the converse to Lemma \ref{lem:V preserving phi contracts to zero}
holds if $X$ is assumed compact:
\begin{lem}
\label{lem:V contracting zero preserves a phi}Let $X$ be a compact
complex manifold endowed with a holomorphic line bundle $L$ and $\phi$
a (possibly singular) metric on $L.$ If 
\[
\delta_{V^{1,0}}\partial\bar{\partial}\phi=0,
\]
then $V^{1,0}$admits a holomorphic lift to $L$ preserving $\phi.$ 
\end{lem}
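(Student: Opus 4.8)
The plan is to read the lift off directly from the hypothesis, reversing the computation in the proof of Lemma~\ref{lem:V preserving phi contracts to zero}. Fix a trivializing cover $\{U\}$ of $X$ with holomorphic transition functions $g_{UU'}$ for $L$ and local weights $\phi_U$ of the metric, so that $\phi_{U'}-\phi_U=\log|g_{UU'}|^2$ as currents. Consider the local currents $u_U:=\delta_{V^{1,0}}\partial\phi_U$ (which equals $V^{1,0}(\phi_U)$, since $V^{1,0}$ is of type $(1,0)$). The hypothesis $\delta_{V^{1,0}}\partial\bar{\partial}\phi=0$ says, locally and in the sense of currents, that $\bar{\partial}u_U=0$; by elliptic regularity for $\bar{\partial}$ this forces each $u_U$ to be a genuine holomorphic function on $U$. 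This is the one analytic input, and keeping the manipulations valid for a singular (merely $L_{loc}^1$) weight is the point that needs care.

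Next I would observe that the $u_U$ automatically satisfy the cocycle relation needed to lift $V^{1,0}$. Differentiating $\phi_{U'}-\phi_U=\log|g_{UU'}|^2$ and contracting $V^{1,0}$ into the $(1,0)$-part gives
\[
u_{U'}-u_U=\delta_{V^{1,0}}\partial\log|g_{UU'}|^2=V^{1,0}(\log g_{UU'}),
\]
the anti-holomorphic part dropping out. The right-hand side is exactly the \v{C}ech $1$-cocycle in $H^1(X,\mathcal{O}_X)$ whose vanishing is the obstruction to a holomorphic lift of $V^{1,0}$ to $L$; the $\{u_U\}$ provide an explicit holomorphic splitting. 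Concretely, in a fibre coordinate $w_U$ on $L|_U$ I would set $\tilde{V}|_U:=V^{1,0}+u_U\,w_U\,\partial/\partial w_U$; the displayed relation is precisely the gluing condition, so the $\tilde{V}|_U$ patch to a global, $\C^*$-equivariant holomorphic vector field on the total space of $L$ covering $V^{1,0}$, i.e. an infinitesimal lift. Since $X$ is compact, its flow is globally defined and provides a holomorphic lift $F_\tau$ of the flow of $V^{1,0}$ to $L$.

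Finally I would check that this lift preserves $\phi$. For the lift just built the associated automorphy factor satisfies $\partial/\partial\tau|_{\tau=0}\log g=u_U$, so the same computation as in Lemma~\ref{lem:V preserving phi contracts to zero} gives, for the generator $h:=\partial/\partial\tau|_{\tau=0}F_\tau^*\phi$, the value $h_U=u_U-u_U=0$, a global identity of currents. To upgrade $h=0$ to $F_\tau^*\phi=\phi$ for all $\tau$, I would use the one-parameter-group property $F_{\tau_0+\tau}^*\phi=F_{\tau_0}^*F_\tau^*\phi$ to get $\partial/\partial\tau\,F_\tau^*\phi=F_{\tau_0}^*h=0$ for every $\tau_0$; since each $F_\tau^*\phi-\phi$ is a \emph{real} current, its $\partial/\partial\bar{\tau}$-derivative vanishes as well, so $F_\tau^*\phi-\phi$ has vanishing full differential in $\tau$ and, being $0$ at $\tau=0$, vanishes identically. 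The main obstacle is the first step, namely passing from the current equation to genuine holomorphicity of the $u_U$ while keeping every operation legitimate for a singular weight; once that is in place, both the construction of the lift and the invariance of $\phi$ are formal.
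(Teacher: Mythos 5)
Your proof is correct, but it takes a genuinely different route from the paper's. The paper constructs the lift abstractly: it fixes a smooth reference metric $\phi_{0},$ uses $\phi-\phi_{0}\in L^{1}(X)$ to show that $\delta_{V^{1,0}}\partial\bar{\partial}\phi_{0}$ is $\bar{\partial}$-exact as a current, hence (after regularization) $\bar{\partial}$-exact in the smooth category, and then cites \cite[Lemma 13]{ber0} to obtain \emph{some} $\C^{*}$-equivariant holomorphic lift; for that lift the generalized function $h=\frac{\partial}{\partial\tau}_{|\tau=0}F_{\tau}^{*}\phi$ is holomorphic by the hypothesis, hence \emph{constant} by compactness of $X,$ and the lift is finally corrected by a multiple of the generator of the $\C^{*}$-action to kill that constant. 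You instead read the lift off from the singular metric itself: the vertical components $u_{U}=V^{1,0}(\phi_{U})$ are holomorphic by hypoellipticity of $\bar{\partial}$ (the hypothesis being exactly $\bar{\partial}u_{U}=0$ as distributions, which is legitimate for $\phi_{U}\in L_{loc}^{1}$ because $V^{1,0}$ has holomorphic coefficients and distributional derivatives commute), and they split the obstruction cocycle $V^{1,0}(\log g_{UU'})$; with the paper's convention $\phi_{U}=-\log\Vert e_{U}\Vert^{2}$ the fibre coordinates transform by $w_{U'}=g_{UU'}w_{U}$ and the gluing condition for a lift is precisely $f_{U'}-f_{U}=V^{1,0}(\log g_{UU'}),$ so $f_{U}:=u_{U}$ patches, as you claim. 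For this particular lift one gets $h\equiv0$ on the nose, so no correction by the Euler field and no compactness-based constancy argument are needed (compactness enters only to have globally defined flows); you also supply the upgrade from the infinitesimal identity $h=0$ to $F_{\tau}^{*}\phi=\phi$ for all $\tau$ via the group property and reality of $F_{\tau}^{*}\phi-\phi,$ a step the paper leaves implicit. What each approach buys: yours is self-contained (no appeal to \cite{ber0} or to regularization of currents) and essentially local, so it in fact yields the invariant infinitesimal lift on a possibly non-compact $X;$ the paper's is shorter given the cited lemma and shows the more flexible fact that an arbitrary $\C^{*}$-equivariant lift can always be normalized, using only the $\C^{*}$-action, so as to preserve $\phi.$
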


\begin{proof}
We first show that $V^{1,0}$ admits a $\C^{*}-$equivariant holomorphic
lift to $L.$ To this end let $\phi_{0}$ be a fixed smooth metric
on $L.$ Then there exists a smooth function $h_{0}$ such that 
\begin{equation}
\delta_{V^{1,0}}dd^{c}\phi_{0}=\bar{\partial}h_{0}.\label{eq:h not}
\end{equation}
 Indeed, defining $u:=\phi_{0}-\phi\in L^{1}(X)$ gives 
\[
\delta_{V^{1,0}}dd^{c}\phi_{0}=0+\delta_{V^{1,0}}\bar{\partial}\partial u=\bar{\partial}\left(\delta_{V^{1,0}}\partial u\right),
\]
 using that $\bar{\partial}V^{1,0}=0.$ Thus $\delta_{V^{1,0}}\partial\bar{\partial}\phi_{0}$
is a smooth $(0,1)-$form which is $\bar{\partial}-$exact in the
complex of currents on $X$ and hence, by standard regularization
results, also $\bar{\partial}-$exact with respect to smooth forms.
This proves the existence of a smooth function $h_{0}$ as in formula
\ref{eq:h not}. But, as is well-known this implies that $V^{1,0}$
admits a $\C^{*}-$equivariant holomorphic lift to $L$ (see \cite[Lemma 13]{ber0}).
Next, as explained in the proof of Lemma \ref{lem:V preserving phi contracts to zero},
the function $h$ associated to $(V^{1,0},\phi),$ defined by formula
\ref{eq:def of h in pf}, satisfies $\bar{\partial}h=\delta_{V^{1,0}}\bar{\partial}\partial\phi,$which,
by assumption, vanishes identically. Hence, $h$ is holomorphic and
since $X$ is compact it follows that $h$ is constant, $h=c.$ By
the very definition of $h$ this means that 
\[
\frac{\partial}{\partial\tau}_{|\tau=0}\left((F_{\tau})^{*}\phi-\text{\ensuremath{\Re(}}c\tau)\right)=0,
\]
 where $\text{\ensuremath{\Re(}\ensuremath{\cdot}})$ denotes the
real part. Hence, if we define a new holomorphic lift of $V^{1,0}$
to $L$ by adding a term proportional to the generator of the standard
$\C^{*}-$action on $L$ and denote its flow by $G_{\tau},$ then
$(G_{\tau})^{*}\phi=\phi,$ as desired.
\end{proof}
Combining this lemma with Theorem \ref{thm:sing metric Fano} and
Prop \ref{prop:inv sing metric L} thus yields the following partial
generalization of Prop \ref{prop:(Berndtsson).-Let-}, where, in particular,
$\phi$ is not assumed to be psh:
\begin{prop}
\label{prop:variant of Bernd}Let $L$ be a big holomorphic line bundle
over a compact complex manifold $X$ and $V^{1,0}$ a holomorphic
vector field on $X$ and assume that 

\[
\delta_{V^{1,0}}\partial\bar{\partial}\phi=0,
\]
 for a (possibly singular) metric $\phi$ on $L$ with positive curvature
current $\partial\bar{\partial}\phi$ (i.e. $\phi$ is locally psh).
Then $V^{1,0}$ vanishes identically if $L=-K_{X}$ and $e^{-\phi}\in L_{loc}^{p}$
for some $p>1.$ In general, $V^{1,0}$ vanishes identical if $e^{-\phi}\in L_{loc}^{k}$
for the minimal integer $k$ such that the meromorphic Kodaira map
from $X$ to $H^{0}(X,kL+K_{X})$ has maximal rank (i.e. equals the
dimension of $X).$
\end{prop}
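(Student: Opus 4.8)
The plan is to obtain this proposition as a direct synthesis of the local converse Lemma \ref{lem:V contracting zero preserves a phi} with the two global vanishing statements Theorem \ref{thm:sing metric Fano} and Proposition \ref{prop:inv sing metric L}. The observation driving the argument is that both of those theorems take as their hypothesis the \emph{flow-invariance} $F_{\tau}^{*}\phi=\phi$ of the metric, whereas the present proposition only assumes the weaker-looking infinitesimal condition $\delta_{V^{1,0}}\partial\bar{\partial}\phi=0$. Lemma \ref{lem:V contracting zero preserves a phi} is precisely what bridges this gap on a compact manifold, so the whole proof should reduce to invoking it first and then quoting the appropriate global result according to whether $L=-K_{X}$ or $L$ is a general big line bundle.

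Concretely, I would first apply Lemma \ref{lem:V contracting zero preserves a phi}. Since $X$ is compact and $\delta_{V^{1,0}}\partial\bar{\partial}\phi=0$, the lemma both produces a $\C^{*}$-equivariant holomorphic lift of $V^{1,0}$ to $L$ and arranges that the flow $F_{\tau}$ of this lift satisfies $F_{\tau}^{*}\phi=\phi$; this is exactly the input required by Theorem \ref{thm:sing metric Fano} and Proposition \ref{prop:inv sing metric L}. For the case $L=-K_{X}$ I would then merely unwind the integrability hypothesis: $e^{-\phi}\in L_{loc}^{p}$ for some $p>1$ is by definition the statement that $e^{-p\phi_{U}}\in L_{loc}^{1}$, which is the hypothesis of Theorem \ref{thm:sing metric Fano}. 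As $-K_{X}$ is big and the flow preserves $\phi$, that theorem forces $V^{1,0}\equiv0$. For general big $L$ I would instead invoke Proposition \ref{prop:inv sing metric L}: here the lift produced in the first step is essential, since a vector field need not lift to an arbitrary $L$, and the condition $e^{-\phi}\in L_{loc}^{k}$ is the same as $e^{-k\phi_{U}}\in L_{loc}^{1}$. Taking $k$ to be the minimal integer for which the Kodaira map to $H^{0}(X,kL+K_{X})$ has maximal rank, that proposition again yields $V^{1,0}\equiv0$.

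The only genuine subtlety, and the step I would flag as the crux, is the first one: the passage from the contraction condition to a $\phi$-preserving lift. This is where compactness is indispensable, entering through the fact that the generalized function $h$ of formula \ref{eq:def of h in pf} satisfies $\bar{\partial}h=0$ and hence is holomorphic, so that it is constant and a suitable multiple of the $\C^{*}$-generator can be added to trivialize it. A minor bookkeeping point I would address is that for $L=-K_{X}$ the lift supplied by the lemma need not be the canonical lift used to phrase Theorem \ref{thm:sing metric Fano}; however, two such lifts differ only by the $\C^{*}$-generator, which rescales the induced action on $H^{0}(X,-kK_{X})$ by a nonzero scalar and therefore leaves the projectivized Kodaira-map argument in that proof unaffected. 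I would also point out that the positivity of $\partial\bar{\partial}\phi$ is not in fact used anywhere in this chain of implications beyond guaranteeing that $e^{-\phi}$ is a bona fide density; it is retained in the statement only to display the result as a faithful generalization of Berndtsson's Proposition \ref{prop:(Berndtsson).-Let-}, in which $\phi$ is assumed psh.
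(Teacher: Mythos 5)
Your proposal is correct and is exactly the paper's own argument: the paper obtains Proposition \ref{prop:variant of Bernd} precisely by combining Lemma \ref{lem:V contracting zero preserves a phi} (converting the contraction condition into a $\phi$-preserving holomorphic lift, using compactness to make $h$ constant) with Theorem \ref{thm:sing metric Fano} for $L=-K_{X}$ and Proposition \ref{prop:inv sing metric L} for a general big $L$, with the same translation of the $L_{loc}^{p}$ and $L_{loc}^{k}$ hypotheses. Your two side remarks --- that the lift supplied by the lemma may differ from the canonical one only by a multiple of the $\C^{*}$-generator (harmless for the projectivized Kodaira-map argument), and that the psh assumption is retained only to parallel Berndtsson's Proposition \ref{prop:(Berndtsson).-Let-} --- are sound refinements of points the paper leaves implicit.
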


This result generalizes Prop \ref{prop:(Berndtsson).-Let-} in the
case that $L=-K_{X}.$ However, in the case of a general line bundle
$L,$ assumed big and nef, the assumption that $H^{0}(X,kL+K_{X})$
be non-trivial is weaker than demanding that the rank of the Kodaira
map be maximal. This is illustrated by the example following Prop
\ref{prop:inv sing metric L} where $H^{0}(X,kL+K_{X})$ is non-trivial
for $k\geq2,$ while the maximal rank property holds for $k\geq3.$ 

\subsubsection{Generalized Hamiltonians}

Let $L\rightarrow X$ be a big holomorphic line bundle over a compact
complex manifold, $V^{1,0}$ a holomorphic vector field on $X$ that
lifts holomorphically to $L$ and denote by $F_{\tau}$ the $\C^{*}-$equivariant
flow on $L$ covering the flow on $X.$ For example, if $L=-K_{X}$
any holomorphic vector field $V^{1,0}$ admits a canonical lift (see
Section \ref{subsec:Setup}). Given a (possibly singular) metric $\phi$
on $L$ we can then associate a generalized (complex-valued) function
$h$ to $V^{1,0},$ defined by formula \ref{eq:def of h in pf}. Equation
\ref{eq:dbar h in pf} says that $h$ is a \emph{complex Hamiltonian}
for the vector field $V^{1,0},$ in a generalized sense. Moreover,
Theorem \ref{thm:sing metric Fano} and Prop \ref{prop:inv sing metric L}
show that the association $V^{1,0}\mapsto h$ is injective under the
corresponding integrability conditions on $e^{-\phi_{U}}.$

Next, denote by $V_{\text{}}$ the real vector field on $X$ defined
as 
\[
V:=\left(V^{1,0}-\overline{V^{1,0}}\right)
\]
 (i.e. the imaginary part of $V^{1,0}$). Then $h_{V^{1,0}}$ is real-valued
iff the flow of $V$ preserves the metric $\phi$ (and then $h$ may
be expressed as the derivative of $\phi$ along the flow of $JV/2$,
where $J$ denotes the complex structure on $TX).$ Moreover, equation
\ref{eq:dbar h in pf} then translates to
\[
dh=\delta_{V}i\partial\bar{\partial}\phi
\]
 showing that $h$ is a Hamiltonian for the vector field $V,$ in
a generalized sense. While Hamiltonians are usually associated to
vector fields $V$ that lift to $L$ and whose flow preserve a given
symplectic form, in the present setup $i\partial\bar{\partial}\phi$
is neither assumed to be smooth nor non-degenerate. Still, the results
above reveal that the association $V\mapsto h$ is injective under
the appropriate integrability assumption on $e^{-\phi_{U}}.$

\section{\label{sec:The-third-proof}The third proof and coercivity of the
quantized Ding functional}

In this section yet another proof of Theorem \ref{thm:gibbs intro}
is provided, under the condition that $k$ is sufficiently large to
ensure that the corresponding Kodaira map is a biholomorphism onto
its image. But we start with a more general setup where $X$ is assumed
to be a compact complex manifold and $k$ is any given positive integer
such that the dimension $N_{k}$ of $H^{0}(X,-kK_{X})$ is strictly
positive. To simplify the notation we will will abbreviate $N_{k}=N.$
We fix once and for all a smooth metric $\phi_{0}$ on $-K_{X},$
a bases $s_{1}^{(k)},...,s_{N}^{(k)}$ in $H^{0}(X,-kK_{X})$ and
an Hermitian metric $H_{0}$ on $H^{0}(X,-kK_{X})$ making $s_{1}^{(k)},...,s_{N}^{(k)}$
orthonormal. Then, for any positive number $\gamma$ we set 
\[
\mathcal{Z}_{N,-\gamma}=\int_{X^{N}}\left\Vert \det S^{(k)}\right\Vert _{\phi_{0}}^{-2\gamma/k}(e^{-\phi_{0}})^{\otimes N},
\]
 where $\left\Vert \det S^{(k)}\right\Vert _{\phi_{0}}$ denotes the
point-wise norm of the section $\det S^{(k)}$ of $-kK_{X^{N}}$ (formula
\ref{eq:slater determinant}) with respect to the metric on $-kK_{X^{N}}$
induced by the fixed metric $\phi_{0}$ on $-K_{X}.$ As before, $e^{-\phi_{0}}$
denotes the corresponding volume form on $X.$ The definition is made
so that $\mathcal{Z}_{N,-1}$ coincides with the normalization constant
$\mathcal{Z}_{N}$ appearing in formula \ref{eq:def of mu N and Z N}
(since the contributions from the norm and the volume form cancel
when $\gamma=1).$ Thus $X$ is Gibbs stable at level $k$ iff there
exists $\gamma>1$ (depending on $k)$ such $\mathcal{Z}_{N,\gamma}<\infty.$
We will apply the following result in \ref{eq:def of mu N and Z N},
expressed in terms of the \emph{twisted quantized Ding functional}
$D_{k,\gamma}$ on the space $\mathcal{H}_{k}$ of all Hermitian metrics
$H_{k}$ on the $N-$complex vector space $H^{0}(X,-kK_{X}):$ 
\begin{thm}
\label{thm:inequality Z}\cite[Thm 3.3]{ber13} The following inequality
holds:
\[
-\frac{1}{\gamma N}\log\mathcal{Z}_{N_{k}}(-\gamma)\leq\inf_{\mathcal{H}_{k}}D_{k,-\gamma}+\frac{1}{kN}\log N.
\]
\end{thm}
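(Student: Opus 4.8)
The plan is to read Theorem~\ref{thm:inequality Z} as a comparison between the logarithmic partition function on the left and an infimum of the twisted quantized Ding functional on the right, so the entire content is to make the quantity $D_{k,-\gamma}(H_k)$ precise and then reduce the claimed inequality to a Jensen-type estimate. First I would fix the reference metric data already introduced in the section -- the smooth metric $\phi_0$ on $-K_X$, the basis $s_1^{(k)},\dots,s_N^{(k)}$ and the Hermitian metric $H_0$ making them orthonormal -- and write the quantized Ding functional $D_{k,-\gamma}$ on $\mathcal{H}_k$ explicitly as the sum of two terms: an energy-type term $-\frac{1}{kN}\log\det(H_k/H_0)$ built from the relative determinant of $H_k$ against the base point $H_0$, and a partition-function term $-\frac{1}{\gamma N}\log\mathcal{Z}_{N}(-\gamma;H_k)$ in which the integrand $\|\det S^{(k)}\|^{-2\gamma/k}$ is now measured with the metric on $H^0(X,-kK_X)$ induced by $H_k$ rather than $H_0$. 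With this normalization the value of $D_{k,-\gamma}$ at the base point $H_0$ is exactly $-\frac{1}{\gamma N}\log\mathcal{Z}_{N}(-\gamma)$, the left-hand side of the claimed inequality.

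Once the functional is pinned down, the strategy is \emph{not} to evaluate $D_{k,-\gamma}$ at $H_0$ (that would only reproduce the left side with no gain), but to compare it against the genuine infimum over $\mathcal{H}_k$. I would therefore track how both terms transform under a change of Hermitian metric $H_k = H_0 \cdot e^{A}$ for a Hermitian endomorphism $A$ of $H^0(X,-kK_X)$: the energy term changes linearly by $-\frac{1}{kN}\operatorname{tr}A$, while the partition term changes by $-\frac{1}{\gamma N}\log$ of a Gaussian-type average of $e^{-\gamma\langle A\cdot,\cdot\rangle/k}$ against the probability measure $\mu^{(N)}$. The key step is to bound this average below using Jensen's inequality, exploiting that $\mu^{(N)}$ is a probability measure and that the expected value of the quadratic form $\langle A\,s,s\rangle$ against the Fubini--Study-type weight coming from $\det S^{(k)}$ can be controlled by $\operatorname{tr}A$ up to the combinatorial factor $N$. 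The residual discrepancy, which accounts precisely for the normalization of the Slater determinant against the product measure $(e^{-\phi_0})^{\otimes N}$, is what produces the error term $\frac{1}{kN}\log N$.

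The main obstacle I anticipate is controlling this discrepancy term sharply enough to land exactly $\frac{1}{kN}\log N$ and no worse. The difficulty is that $\det S^{(k)}$ is an antisymmetric (Slater) determinant, so the pointwise weight it defines on $X^N$ is not a product measure; extracting the trace of $A$ from the $\log$-average requires symmetrizing over the $N!$ permutations and estimating the ratio between $\int_{X^N}\|\det S^{(k)}\|^2$ and the product $\bigl(\int_X \|s_i^{(k)}\|^2\bigr)^{\otimes}$, which is where the factor $N$ (equivalently $\log N!$ via Stirling, after dividing by $N$) enters. I would handle this by invoking the exact Bergman-kernel identity for the $L^2$ norm of a Slater determinant -- namely that integrating $\|\det S^{(k)}\|^2$ reduces to a permanent/determinant of the Gram matrix of the $s_i^{(k)}$, which is $1$ in the orthonormal normalization -- so that the only surviving overcounting is the combinatorial $N!$ factor. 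Translating that factor through the $\frac{1}{\gamma N}$ and $\frac{1}{kN}$ prefactors, together with $\gamma\le 1$ in the relevant regime, should yield the stated bound; the remaining work is the routine but delicate bookkeeping of the prefactors, which I would defer to the cited argument \cite[Thm 3.3]{ber13}.
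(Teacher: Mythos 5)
A preliminary remark: the paper never proves Theorem \ref{thm:inequality Z} — it is imported wholesale from \cite[Thm 3.3]{ber13} — so your proposal can only be judged on its own terms, and on those terms it has a genuine gap at the very first step. You decompose $D_{k,-\gamma}$ into an energy term plus ``a partition-function term $-\frac{1}{\gamma N}\log\mathcal{Z}_{N}(-\gamma;H_{k})$'' and assert that $D_{k,-\gamma}(H_{0})$ equals the left-hand side. That is not the functional of formula \ref{eq:def of D k gamma on H k}: its second term is $-\frac{1}{\gamma}\log\int_{X}e^{-(\gamma FS(H)+(1-\gamma)\phi_{0})}$, a \emph{one-point} integral over $X$ built from the Fubini--Study metric $FS(H)$, which your proposal never mentions, not an $N$-point integral over $X^{N}$. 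Worse, your partition term obeys the exact change-of-basis identity $\mathcal{Z}_{N}(-\gamma;H_{k})=\det\bigl(H_{k}(s_{i}^{(k)},s_{j}^{(k)})\bigr)^{\gamma/k}\,\mathcal{Z}_{N,-\gamma}$, so your candidate functional differs from the constant $-\frac{1}{\gamma N}\log\mathcal{Z}_{N,-\gamma}$ only by a multiple of $\log\det\bigl(H_{k}(s_{i}^{(k)},s_{j}^{(k)})\bigr)$: with the paper's sign for the energy term it is \emph{identically} the left-hand side (the inequality becomes vacuous), and with your sign it is unbounded below (the inequality becomes false). Either way, the entire content of the theorem — a mean-field bound comparing an $N$-point integral to a one-point variational functional — has been defined away. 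The correct starting point is the identity, valid for every $H\in\mathcal{H}_{k}$,
\[
-\frac{1}{\gamma N}\log\mathcal{Z}_{N,-\gamma}=D_{k,-\gamma}(H)-\frac{1}{\gamma N}\log R(H),\qquad R(H):=\int_{X^{N}}\left\Vert \det S^{(k),H}\right\Vert _{FS(H)}^{-2\gamma/k}d\nu_{H}^{\otimes N},
\]
where $\det S^{(k),H}$ is the Slater determinant of an $H$-orthonormal basis and $\nu_{H}$ is the probability measure proportional to $e^{-(\gamma FS(H)+(1-\gamma)\phi_{0})}$; the theorem is precisely the claim that $R(H)\geq N^{-\gamma/k}$ for all $H$.

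Second, even after this repair, your quantitative plan does not produce the stated constant. Your two tools — Jensen's inequality for the convex map $t\mapsto t^{-\gamma/k}$, and the $L^{2}$ identity $\int_{X^{N}}\Vert\det S^{(k),H}\Vert_{FS(H)}^{2}d\nu_{H}^{\otimes N}=N!\det M$ (where the Gram matrix $M$ has trace $N$ thanks to the $1/N$-normalization of $FS$, hence $\det M\leq1$ by the arithmetic-geometric mean inequality) — yield exactly $R(H)\geq(N!)^{-\gamma/k}$, i.e.\ the error term $\frac{1}{kN}\log N!$, not $\frac{1}{kN}\log N$. Stirling cannot close this gap: $\frac{1}{kN}\log N!\approx\frac{1}{k}\log N$, larger than the stated error by a factor of order $N$; and Jensen really is lossy here, since there are metrics $H$ for which $\Vert\det S^{(k),H}\Vert_{FS}^{2}$ has $\nu_{H}^{\otimes N}$-mass of order $N!$, so no bookkeeping of prefactors rescues the argument. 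Your aside that ``$\gamma\leq1$ in the relevant regime'' helps is also backwards: the paper applies the theorem with $\gamma>1$, since Gibbs stability at level $k$ means $\mathcal{Z}_{N,-\gamma}<\infty$ for some $\gamma>1$. Finally, deferring the crux to ``the cited argument \cite[Thm 3.3]{ber13}'' is circular — that citation \emph{is} the statement being proved. One mitigating observation: the weaker inequality with error $\frac{1}{kN}\log N!$ that your (corrected) method does give would suffice for everything this paper uses the theorem for, namely that $\inf_{\mathcal{H}_{k}}D_{k,-\gamma}>-\infty$ whenever $\mathcal{Z}_{N,-\gamma}<\infty$; but it is not the inequality as stated.
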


We recall that the functional $D_{k,-\gamma}$ on $\mathcal{H}_{k}$
is defined by 
\begin{equation}
D_{k,-\gamma}(H):=\frac{1}{kN}\log\det_{ij\leq N}\left(H(s_{i}^{(k)},s_{j}^{(k)})\right)-\frac{1}{\gamma}\log\int_{X}e^{-\left(\gamma FS(H_{k})+(1-\gamma)\phi_{0}\right)},\label{eq:def of D k gamma on H k}
\end{equation}
 where $FS(H)$ is the metric on $-K_{X}$ induced from the Fubini-Study
metric on $\mathcal{O}(1)\rightarrow\P^{N-1}$ corresponding to $H:$

\begin{equation}
FS(H):=k^{-1}\log\left(\frac{1}{N}\sum_{i=1}^{N}\left|s_{i}^{H}\right|^{2}\right)\label{eq:def of fs}
\end{equation}
where $(s_{i}^{H})$ is any basis in $H^{0}(X,-kK_{X})$ which is
orthonormal wrt $H$ (the metric $FS(H)$ is smooth on the complement
of the base locus of of $-kK_{X}).$ The normalization by $N$ used
here is non-standard.
\begin{rem}
The \emph{quantized Ding functional $D_{k,-1}$} was introduced in
\cite{bbgz} (using a different notation), building on \cite{d1}.
The case of a general $\gamma$ is studied in \cite{rtz}, where $D_{k,\gamma}$
corresponds to the functional denoted by $F_{m}^{f,\delta}$ with
$m=k$ and $\delta=\gamma.$ In these works it is usually assumed
that $L$ is ample and $kL$ is base point free, but all the properties
that we shall use extend to the case when $N_{k}>0$ (with the same
proofs). 
\end{rem}

The functional $D_{k,-\gamma}$ is invariant under scaling by positive
numbers: 
\begin{equation}
D_{k,-\gamma}(e^{c}H)=D_{k,-\gamma}(H)\,\,\,\forall c\in\R,\label{eq:scale invariance of D k}
\end{equation}
 i.e. $D_{k,-\gamma}$ descends to the quotient space $\mathcal{H}_{k}/\R.$ 

Now assume that $X$ is Gibbs stable at level $k.$ Then it follows
from the inequality in the previous theorem that there exists $\gamma>1$
such that $D_{k,-\gamma}$ is bounded from below on $\mathcal{H}_{k},$
i.e. there exists a constant $C$ such that 
\[
D_{k,-\gamma}\geq-C.
\]
 This implies that the quantized Ding functional $D_{k,-1}$ is \emph{coercive}
on $\mathcal{H}_{k}/\R$ in the following sense: there exists $\epsilon>0$
such that 
\begin{equation}
D_{k,-1}\geq\epsilon J_{k}-C\label{eq:coerc ineq}
\end{equation}
 (after perhaps increasing $C),$ where $J_{k}$ is the function on
the space $\mathcal{H}_{k}/\R$ defined by 
\[
J_{k}(H):=\frac{1}{kN}\log\det_{ij\leq N}\left(H(s_{i}^{(k)},s_{j}^{(k)})\right)+\sup_{X}(FS(H)-\phi_{0}).
\]
Indeed, this follows from Hölder's inequality, precisely as in the
proof of \cite[Prop 4.9]{rtz}. Thus Theorem \ref{thm:gibbs intro}
(in its weaker form discussed above) follows from the following:
\begin{prop}
Assume that the quantized Ding functional $D_{k,-1}$ is coercive
on\emph{ $\mathcal{H}_{k},$ }i.e. that inequality \ref{eq:coerc ineq}
holds. Then the flow of $V^{1,0}$ acts trivially on the complex vector
space $H^{0}(X,-kK_{X}).$ In particular, if $-K_{X}$ is big and
$k$ is sufficiently large to ensure that the corresponding Kodaira
map is a bimeromorphism onto its image, then $X$ admits no non-trivial
holomorphic vector fields.
\end{prop}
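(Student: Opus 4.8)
The plan is to track how the quantized Ding functional $D_{k,-1}$ transforms under the flow and to pit this against the coercivity estimate \ref{eq:coerc ineq}. First I would fix the reference data $\phi_{0},\,s_{1}^{(k)},\dots,s_{N}^{(k)},\,H_{0}$ as in the setup and let $F_{\tau}$ denote the \emph{canonical} lift of $V^{1,0}$ to $-kK_{X}$. Since $F_{\tau}$ is holomorphic in $\tau\in\C$ (see \ref{eq:derivative of flow}), its induced action on the finite-dimensional space $H^{0}(X,-kK_{X})$ is a holomorphic one-parameter subgroup, i.e. $A_{\tau}=\exp(\tau B)$ for a single endomorphism $B$, with matrix $M_{\tau}=\exp(\tau B)$ in the fixed basis; as in \ref{eq:intertw}, $A_{\tau}$ descends to a biholomorphism $[F_{\tau}]$ of $\P^{N-1}$ intertwining the Kodaira map $\Phi_{k}$.

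The key computation is the transformation law
\[
D_{k,-1}(A_{\tau}^{*}H)=D_{k,-1}(H)+\tfrac{2}{kN}\Re\!\left(\tau\operatorname{tr}B\right),\qquad\tau\in\C.
\]
For the first term in \ref{eq:def of D k gamma on H k} this is immediate: pulling back $H$ by $A_{\tau}$ replaces the Gram matrix $G=(H(s_{i}^{(k)},s_{j}^{(k)}))$ by $M_{\tau}^{*}GM_{\tau}$, so its $\log\det$ picks up $\log|\det M_{\tau}|^{2}=2\Re(\tau\operatorname{tr}B)$. The point is that the second term (with $\gamma=1$), namely $-\log\int_{X}e^{-FS(H)}$, is \emph{invariant}: from \ref{eq:def of fs} and the intertwining \ref{eq:intertw} one gets the identity of metrics $FS(A_{\tau}^{*}H)=F_{\tau}^{*}FS(H)$ on $-K_{X}$, and since $e^{-F_{\tau}^{*}\psi}=F_{\tau}^{*}(e^{-\psi})$ for the canonical lift, the change of variables $x\mapsto F_{\tau}(x)$ leaves $\int_{X}e^{-FS(H)}$ unchanged. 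Thus $D_{k,-1}$ is \emph{affine} along the orbit of $A_{\tau}$.

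Now I would invoke coercivity. Since $J_{k}$ is bounded below on $\mathcal{H}_{k}/\R$ (a standard property of the quantized $J$-functional), inequality \ref{eq:coerc ineq} gives $D_{k,-1}(A_{\tau}^{*}H_{0})\geq\epsilon\inf J_{k}-C>-\infty$ uniformly in $\tau$; together with the affine formula this forces $\Re(\tau\operatorname{tr}B)$ to be bounded below over all of $\C$, whence $\operatorname{tr}B=0$. With $\operatorname{tr}B=0$ the functional $D_{k,-1}$ is constant along the orbit, so \ref{eq:coerc ineq} now reads $\epsilon J_{k}(A_{\tau}^{*}H_{0})\leq D_{k,-1}(H_{0})+C$, i.e. $J_{k}$ is bounded above along the orbit. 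As $J_{k}$ is proper on $\mathcal{H}_{k}/\R$, the condition number of the Gram matrix $M_{\tau}^{*}M_{\tau}=\exp(\bar{\tau}B^{*})\exp(\tau B)$ is then bounded uniformly in $\tau\in\C$. An elementary estimate on the singular values of $\exp(\tau B)$ — using eigenvectors to detect any non-zero eigenvalue, and the polynomial growth of $\exp(\tau B)$ in the nilpotent case, now exploiting that $\tau$ ranges over \emph{all} of $\C$ (so that the imaginary directions rule out an anti-Hermitian part as well) — shows that the non-scalar part of $B$ must vanish; combined with $\operatorname{tr}B=0$ this yields $B=0$, i.e. $A_{\tau}=\mathrm{Id}$ on $H^{0}(X,-kK_{X})$. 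Finally, when $-K_{X}$ is big and $k$ is large enough that $\Phi_{k}$ is a bimeromorphism onto its image, the intertwining \ref{eq:intertw} gives $\Phi_{k}=\Phi_{k}\circ F_{\tau}$, and generic injectivity of $\Phi_{k}$ then forces $F_{\tau}=\mathrm{Id}$ on $X$, hence $V^{1,0}\equiv0$.

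I expect the main obstacle to be the closing linear-algebra/Liouville step, which is where allowing $\tau$ to range over all of $\C$ (rather than just $\R$) is essential: only then does boundedness of the orbit in $\mathcal{H}_{k}/\R$ rule out \emph{both} a Hermitian and an anti-Hermitian part of $B$, leaving $B$ scalar, and the trace computation then kills the remaining scalar. This step rests on two inputs that must be pinned down — the properness and lower-boundedness of $J_{k}$ on $\mathcal{H}_{k}/\R$ (so that a bound on $J_{k}$ controls the spectral spread of $M_{\tau}^{*}M_{\tau}$), and the transformation law itself, whose only non-formal ingredient is the invariance $\int_{X}e^{-FS(A_{\tau}^{*}H)}=\int_{X}e^{-FS(H)}$ coming from the canonical-lift compatibility $e^{-F_{\tau}^{*}\psi}=F_{\tau}^{*}e^{-\psi}$.
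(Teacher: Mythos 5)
Your proposal is correct and follows essentially the same route as the paper's proof: the transformation law $D_{k,-1}(A_{\tau}^{*}H_{0})-D_{k,-1}(H_{0})=\tfrac{1}{kN}\log|\det M_{\tau}|^{2}$ (resting on the invariance of $\int_{X}e^{-FS(H)}$ under the canonical lift, the paper's formula \ref{eq:integral invariant under pullback}), coercivity together with the exhaustion property of $J_{k}$ on $\mathcal{H}_{k}/\R$ to trap the orbit in a bounded set, and a rigidity step to conclude $[F_{\tau}]=I$, followed by the same Kodaira-map argument. The only cosmetic differences are that you kill $\operatorname{tr}B$ directly from the affine formula, where the paper invokes harmonicity plus a lower bound and disposes of the residual scalar $a$ at the very end, and that you replace the paper's Liouville argument for the bounded holomorphic family $\widetilde{F_{\tau}}$ by an explicit singular-value analysis of $\exp(\tau B)$.
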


\begin{proof}
Given a holomorphic vector field $V^{1,0}$ denote by $F_{\tau}$
the corresponding $\C^{*}-$equivariant canonical flow on $-K_{X}\rightarrow X$
and by $[F_{\tau}]$ the corresponding one-parameter subgroup of the
automorphism group $GL\left(H^{0}(X,-kK_{X})\right))$ of the vector
space $H^{0}(X,-kK_{X}).$ We shall identify $[F_{\tau}]$ with a
one-parameter subgroup of $GL(N,\C)$ (depending on the fixed bases
in $H^{0}(X,-kK_{X})$).

\emph{Step 1: $D_{k,-1}(F_{\tau}^{*}H_{0})$ is independent of $\tau.$}

To see this first note that, in general, $\tau\mapsto D_{k,-1}(F_{\tau}^{*}H_{0})$
is harmonic on $\C.$ Indeed, for any $\tau$
\begin{equation}
\int_{X}e^{-FS(F_{\tau}^{*}H_{0})}=\int_{X}F_{\tau}^{*}\left(e^{-FS(H_{0})}\right)=\int_{X}e^{-FS(H_{0})}\label{eq:integral invariant under pullback}
\end{equation}
and 
\[
\det_{ij\leq N}\left((F_{\tau}^{*}H_{0})(s_{i}^{(k)},s_{j}^{(k)})\right)=\left|\det_{ij\leq N}[F_{\tau}]\right|^{2}.
\]
 Thus 
\[
D_{k,-1}(F_{\tau}^{*}H_{0})-D_{k,-1}(H_{0})=\frac{1}{kN_{k}}\log\left(\left|\det_{ij\leq N_{k}}[F_{\tau}]\right|^{2}\right),
\]
 which is harmonic since $\tau\mapsto\det_{ij\leq N_{k}}[F_{\tau}]$
is a non-zero holomorphic function. But, in the present situation
$D_{k,-1}$ is bounded from below on $\mathcal{H}_{k}$ and thus the
harmonicity implies that $D_{k,-1}(F_{\tau}^{*}H_{0})$ is constant,
as desired. 

\emph{Step 2: If $\tau\mapsto J_{k}(F_{\tau}^{*}H_{0})$ is bounded
from above, then $[F_{\tau}]=e^{a\tau}I$ for some $a\in\C,$ where
$I$ denotes the identity in $GL(N,\C).$ }

First observe that $J_{k}$ defines an exhaustion function on $\mathcal{H}_{k}/\R.$
Indeed, for any volume form $\nu$ on $X$ there exists a constant
$c$ such 
\[
\sup_{X}u\geq\int_{X}u\nu-c\,\,\forall u\in PSH(X,\omega_{0}),
\]
where $\omega_{0}$ denotes the curvature form of $\phi_{0}$ and
$PSH(X,\omega_{0})$ denotes the space of all functions $u$ in $L^{1}(X)$
such that $i\partial\bar{\partial}u+\omega_{0}\geq0$ in the sense
of currents (this is standard and follows, for example, from the local
submean property of plurisubharmonic functions using a partition of
unity). Thus by the proof of \cite[Prop 3]{d1} (see also \cite[Lemma 7.6]{bbgz})
$J_{k}$ is an exhaustion function on $\mathcal{H}_{k}/\R.$ Next,
consider the fibration
\[
GL\left(N,\C\right)\rightarrow\mathcal{H}_{k},\,\,\,A\mapsto A^{*}H_{0}(=A^{*}A),
\]
whose fibers are isomorphic to $U(N)$ and thus compact. Hence, $J_{k}$
induces an exhaustion function of $GL\left(N,\C\right)/\C^{*}$ that
we denote by $j_{k}.$ Now, consider the function $\tau\mapsto\det[F_{\tau}].$
Since this is a one-parameter subgroup of $\C^{*}$ it can be expressed
as $e^{-a/N\tau}$ for some $a\in\C.$ Thus
\[
[\widetilde{F_{\tau}}]:=e^{a\tau}[F_{\tau}],
\]
 has unit-determinant, i.e. it defines a subgroup of $SL(N,\C).$
Since $D_{k,-1}$ is invariant under scaling the previous step implies
that $\tau\mapsto D_{k,-1}(\widetilde{F_{\tau}}^{*}H_{0})$ is constant.
Hence, by the assumed coercivity, $j_{k}([\widetilde{F_{\tau}}])$
is bounded from above. Since $j_{k}$ is an exhaustion function of
$GL(N,\C)/\C^{*}$ and $\widetilde{F_{\tau}}$ has unit determinant,
it follows that $\widetilde{F_{\tau}}$ takes values in a bounded
subset of $GL(N,\C).$ Hence, $\widetilde{F_{\tau}}$ is independent
of $\tau,$ i.e. $\widetilde{F_{\tau}}=I,$ which concludes the proof
of Step 2. Finally, the latter step implies, in fact, that $a=0,$
using that $F_{\tau}$ is the canonical lift of $V^{1,0}$ to $-K_{X}.$
Indeed, taking $\tau$ to be real and applying formula \ref{eq:integral invariant under pullback}
reveals that the real part of $a$ vanishes. Likewise, taking $\tau$
to be imaginary forces the imaginary part of $a$ to vanish. Thus
$V^{1,0}$ acts trivially on the complex vector space $H^{0}(X,-kK_{X}).$
The last statement then follows precisely as in the end of the proof
of Theorem \ref{thm:sing metric Fano}.
\end{proof}
When $-K_{X}$ is ample the application of the inequality in Theorem
\ref{thm:inequality Z} above may, alternatively, be replaced by the
following argument. First, by \cite[Thm 2.5]{f-o}, 
\[
\text{lct}(X^{N_{k}},\mathcal{D}_{k})\leq\delta_{k}(X),
\]
 where $\delta_{k}(X)$ an invariant introduced in \cite{f-o}. The
assumed Gibbs stability of $X$ at level $k$ means that $\text{lct}(X^{N_{k}},\mathcal{D}_{k})>1$
and thus it implies that $\delta_{k}(X)>1.$ In turn, this entails,
by \cite{rtz}, that the quantized Ding functional $D_{k,-1}$ is
coercive on $\mathcal{H}_{k}.$ Presumably, this argument also applies
when $-K_{X}$ is big. Anyhow, one virtue of the inequality in Theorem
\ref{thm:inequality Z} is that its proof is, essentially, elementary.

\section{\label{sec:Extension-to-the}Extension to the setting of log pairs}

\subsection{Setup}

We briefly recall the general setup of log pairs \cite{ko0,la,bbegz}.
By definition, a \emph{log pair }$(X,\Delta)$ is a normal variety
$X$ together with a $\Q-$divisor $\Delta$ such that $K_{X}+\Delta$
is $\Q-$Cartier, i.e. defines as a $\Q-$line bundle (called the\emph{
log canonical line bundle}). A log pair $(X,\Delta)$ is said to be\emph{
klt} if the following property holds for some (or equivalently any)
\emph{log resolution }$\pi$ of $(X,\Delta)$ i.e. a holomorphic bimeromorphism
from a non-singular variety $X'$ to $X$ such that $\pi^{*}\Delta+E$
has simple normal crossings, where $E$ denotes the sum of the exceptional
divisors of $\pi.$ Let $\Delta'$ be the divisor on $X'$ defined
by
\[
\pi^{*}(K_{X}+\Delta)=K_{X'}+\Delta'.
\]
By assumption $\Delta'$ has simple normal crossing and $(X,\Delta)$
is said to be\emph{ klt} if all the coefficients of $\Delta'$ are
strictly smaller than $1.$ A variety $X$ is said to have\emph{ log
terminal singularities} if the log pair $(X,0)$ is klt. 

Here we shall adopt the analytic characterization of the klt condition
of a log pair $(X,\Delta)$ which goes as follows (see \cite{bbegz}
for further background). First assume that $X$ is non-singular. The
$\Q-$divisor $\Delta$ induces a $\Q-$line bundle on $X,$ denoted
by the same symbol $\Delta,$ and a singular metric on the $\Q-$line
bundle $\Delta$, denoted by $\phi_{\Delta}$ (see Section \ref{subsec:Setup}).
Let now $\phi$ be a locally bounded metric on $-(K_{X}+\Delta).$
Then $\phi+\phi_{\Delta}$ defines a singular metric on $-K_{X}$
and thus also a measure on $X,$ denoted by $e^{-(\phi+\phi_{\Delta})}$.
The log pair $(X,\Delta)$ is klt iff the measure $e^{-(\phi+\phi_{\Delta})}$
on $X$ has finite mass. If $X$ is singular the measure $e^{-(\phi+\phi_{\Delta})}$
is first defined as before on the regular locus $X_{reg}$ of $X$
and then extended by zero to all of $X$ (so that it puts no mass
on the singular locus $X-X_{reg}).$ Then the previous discussion
applies. Note that the construction of the measure $e^{-(\phi+\phi_{\Delta})}$
is compatible with log resolutions, i.e. if $\pi$ is a log resolution
of $(X,\Delta),$ then 
\[
\pi_{*}\left(e^{-(\pi^{*}\phi+\phi_{\Delta'})}\right)=e^{-(\phi+\phi_{\Delta})}
\]
as measures. More generally, a divisor $D$ is said to be \emph{klt
wrt the log pair $(X,\Delta)$ }if $(X,D+\Delta)$ is a log pair which
is klt.

\subsubsection{Vector fields and log pairs}

By definition, a holomorphic vector field $V^{1,0}$ on a normal variety
$X$ is holomorphic vector field on the regular locus $X_{reg}$ of
$X$ such that there exists a holomorphic map $F_{\tau}$ from $\C\times X,$
called the \emph{flow} of $V^{1,0},$ satisfying formula \ref{eq:derivative of flow}
on $X_{reg}.$ If $(X,\Delta)$ is a log pair a holomorphic vector
field $V^{1,0}$ on $X$ is said to be\emph{ tangent to $\Delta$}
if it is tangent to $\Delta$ along the regular locus of $\Delta.$
This equivalently means that the flow $F_{\tau}$ of $V^{1,0}$ preserves
the support of $\Delta.$ Next assume that $\Delta$ has positive
integer coefficients. Equivalently, there exists a line bundle over
$X,$ also denoted by $\Delta,$ with a holomorphic section $s$ cutting
out $\Delta.$ If $F_{\tau}$ preserves the support of $\Delta$ then
there exists a canonical holomorphic $\C^{*}-$equivariant lift of
$F_{\tau}$ to the corresponding line bundle such that $s$ is invariant
under $F_{\tau}.$ Indeed, since $X-\Delta$ is preserved by $F_{\tau}$
the lift over $X-\Delta$ may be defined by 
\[
F_{\tau}(ws_{x}):=ws_{F_{\tau}(x)}
\]
 for any $w\in\C,$ using that $s$ defines a global trivialization
of the line bundle over $X-\Delta.$ This definition extends holomorphically
over the support of $\Delta$ (as can be seen by expressing $s$ in
terms of a fixed local trivialization of the line bundle over a neighborhood
of a given point in $\Delta).$ Finally, in the case that $\Delta$
has some negative integer coefficients the lifting of $F_{\tau}$
is achieved by first lifting $F_{\tau}$ to the dual of the corresponding
line bundle.

\subsection{Gibbs stability of log pairs}

Given a log pair $(X,\Delta)$ we fix a sufficiently divisible positive
integer $k,$ assuring that $k(K_{X}+\Delta)$ is a bona fide line
bundle. Then all the definitions in Section \ref{subsec:Gibbs-stability-and}
can be generalized by replacing the canonical line bundle $K_{X}$
of $X$ with the log canonical line bundle $K_{X}+\Delta$ of $(X,\Delta).$
For example, 
\[
N_{k}:=\dim H^{0}\left(X,-k(K_{X}+\Delta)\right)
\]
 and $(\det S^{(k)})$ denotes the holomorphic section of $-(K_{X^{N_{k}}}+\pi_{1}^{*}\Delta+...+\pi_{N_{k}}^{*}\Delta)$
defined as in formula \ref{eq:def of Slater text}, but now with $s_{1}^{(k)},...,s_{N_{k}}^{(k)}$
denoting a fixed basis in $H^{0}\left(X,-k(K_{X}+\Delta)\right).$
Accordingly, now $\mathcal{D}_{N_{k}}$ denotes the log anti-canonical
divisor on $X^{N_{k}}$ cut-out by $(\det S^{(k)})^{1/k}.$ 
\begin{defn}
A log pair $(X,\Delta)$ is said to be \emph{Gibbs stable at level
$k$ }if the anti-canonical divisor $\mathcal{D}_{k}$ on $X^{N_{k}}$
is klt with respect to the log pair $(X^{N_{k}},\pi_{1}^{*}\Delta+...+\pi_{N_{k}}^{*}\Delta).$ 
\end{defn}

The following result is a generalization of Theorem \ref{thm:gibbs intro}
to the setting of log pairs:
\begin{thm}
\label{thm:gibbs log }Assume that $-(K_{X}+\Delta)$ is big and that
$(X,\Delta)$ is Gibbs stable at some level $k.$ Then $X$ admits
no non-trivial holomorphic vector fields which are tangent to $\Delta.$ 
\end{thm}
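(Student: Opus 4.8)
The plan is to reduce Theorem \ref{thm:gibbs log } to a vanishing statement on the product $Y:=X^{N_{k}}$, promoting the argument of Section \ref{subsec:Gibbs-stability-and} from compact manifolds to log pairs on normal varieties. Set $\Delta_{Y}:=\pi_{1}^{*}\Delta+\cdots+\pi_{N_{k}}^{*}\Delta$ and $L:=-(K_{Y}+\Delta_{Y})$; then $K_{Y}+\Delta_{Y}=\sum_{i}\pi_{i}^{*}(K_{X}+\Delta)$, so $L$ is big because $-(K_{X}+\Delta)$ is. If $V^{1,0}$ is tangent to $\Delta$, its flow $F_{\tau}$ preserves the support of $\Delta$ and hence, as recalled in Section \ref{sec:Extension-to-the}, lifts canonically to a $\C^{*}$-equivariant flow on $-(K_{X}+\Delta)$ leaving the defining section $s_{\Delta}$ invariant; passing to the diagonal produces a vector field $\tilde{V}$ on $Y$, tangent to $\Delta_{Y}$ and lifting canonically to $L$.

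First I would record the log analogue of Lemma \ref{lem:The-zero-locus-of det section}: up to a nonzero scalar the Slater section $\det S^{(k)}\in H^{0}(Y,kL)$ is independent of the basis of $H^{0}(X,-k(K_{X}+\Delta))$, and the induced action of $F_{\tau}$ satisfies $\tilde{F}_{\tau}\cdot\det S^{(k)}=\rho(\tau)\det S^{(k)}$ with $\rho$ nowhere vanishing. Hence the singular metric $\phi:=\frac{1}{k}\log|\det S^{(k)}|^{2}$ on $L$ is invariant, $\tilde{F}_{\tau}^{*}\phi=\phi$, after absorbing the additive constant $\frac{1}{k}\log|\rho(\tau)|^{2}$ into the $\C^{*}$-part of the lift exactly as in the manifold case. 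Since $s_{\Delta}$ is invariant the fixed section metric $\phi_{\Delta_{Y}}$ is invariant as well, so the klt measure $e^{-(\phi+\phi_{\Delta_{Y}})}$ on $Y_{\mathrm{reg}}$ (a measure on $Y$, as $L+\Delta_{Y}=-K_{Y}$) is preserved by the flow of $\tilde{V}$. By Gibbs stability of $(X,\Delta)$ at level $k$ this measure is klt, and thus has density in $L_{\mathrm{loc}}^{p}$ for some $p>1$.

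It then suffices to prove the following log/normal-variety version of Theorem \ref{thm:sing metric Fano}, applied to $(Y,\Delta_{Y},\tilde{V},\phi)$: if $W$ is a holomorphic vector field on a compact normal variety $Y$ tangent to $\Delta_{Y}$ (hence lifting to $L=-(K_{Y}+\Delta_{Y})$) whose flow preserves a singular metric $\phi$ on $L$ with $e^{-(\phi+\phi_{\Delta_{Y}})}\in L_{\mathrm{loc}}^{p}$ for some $p>1$, then $W$ acts trivially on $H^{0}(Y,mL)$ for all sufficiently divisible $m$, and $W\equiv0$ when $L$ is big. The proof transcribes Theorem \ref{thm:sing metric Fano} with all integrals taken over $Y_{\mathrm{reg}}$, on which the klt measure is concentrated: for $s\in H^{0}(Y,mL)$ the quantity $\mathcal{N}_{\epsilon}(s):=\bigl(\int_{Y_{\mathrm{reg}}}(|s|^{2}e^{-m\phi})^{\epsilon/m}e^{-(\phi+\phi_{\Delta_{Y}})}\bigr)^{m/2\epsilon}$ is finite for small $\epsilon$ by the integrability hypothesis and, being built from the invariant data, is invariant under the induced linear flow on $H^{0}(Y,mL)$; its bounded, flow-invariant sublevel set $\{\mathcal{N}_{\epsilon}<1\}$ forces each orbit $\tau\mapsto\tilde{F}_{\tau}\cdot s$, a bounded entire curve in a finite-dimensional space, to be constant, which gives the asserted triviality.

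For the final step, bigness of $L$ supplies, by Siegel's lemma \cite[Lemma 2.2.6]{m-m}, a sufficiently divisible $m$ for which the Kodaira map $\Phi_{m}$ of $mL$ has maximal rank; choosing an open $U\subset Y_{\mathrm{reg}}$ on which $\Phi_{m}$ restricts to a biholomorphism onto its image and invoking the intertwining identity $[\tilde{F}_{\tau}]\circ\Phi_{m}=\Phi_{m}\circ\tilde{F}_{\tau}$, the triviality of $[\tilde{F}_{\tau}]$ forces $\tilde{F}_{\tau}|_{U}=\mathrm{id}$, whence $\tilde{V}\equiv0$ by analytic continuation on the connected manifold $Y_{\mathrm{reg}}$ and therefore $V^{1,0}\equiv0$. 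The hard part will be this passage to singular $Y$: a holomorphic vector field need not lift to a resolution, so the analytic klt characterization, the canonical lift furnished by tangency to $\Delta_{Y}$, and Siegel's generic-embedding argument must all be carried out directly on $Y_{\mathrm{reg}}$. Once these local and generic statements are secured there, the global vanishing follows verbatim from the manifold case.
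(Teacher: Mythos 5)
Your proposal is correct and matches the paper's own proof: the paper likewise passes to $X^{N_{k}}$ with the pulled-back divisor $\pi_{1}^{*}\Delta+\cdots+\pi_{N_{k}}^{*}\Delta$, shows the Slater-determinant metric is invariant under the diagonal flow via the argument of Lemma \ref{lem:The-zero-locus-of det section}, and concludes by a log-pair generalization of Theorem \ref{thm:sing metric Fano} (the paper's Theorem \ref{thm:sing metric log Fano}), whose proof is exactly your $\mathcal{N}_{\epsilon}$/Liouville argument followed by the Siegel--Kodaira-map step. The only cosmetic difference is that the paper secures finiteness of $\mathcal{N}_{\epsilon}$ via strong openness \cite{g-z} on a log resolution rather than quoting the klt-implies-$L_{loc}^{p}$ characterization directly, which amounts to the same openness statement.
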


\begin{rem}
In particular, taking $\Delta=0$ the previous theorem extends Theorem
\ref{thm:gibbs intro} to normal varieties $X.$ Note that the Gibbs
stability of $X$ directly implies that $(X,0)$ is klt, i.e. that
$X$ has log terminal singularities. This is analogous to the result
\cite[Thm 1.3]{od} saying that a K-semistable Fano variety has log
terminal singularities. 

To prove Theorem \ref{thm:gibbs log } denote by $\phi$ the singular
metric on $-(K_{X^{N_{k}}}+\pi_{1}^{*}\Delta+...+\pi_{N_{k}}^{*}\Delta)\rightarrow X^{N_{k}}$
corresponding to $\det S^{(k)}\in H^{0}\left(X,-k(K_{X}+\Delta)\right).$
By assumption, the diagonal flow $F_{\tau}$ on $X^{N_{k}}$ induced
by a given holomorphic vector field $V^{1,0}$ preserves the line
bundle $-k(K_{X^{N_{k}}}+\pi_{1}^{*}\Delta+...+\pi_{N_{k}}^{*}\Delta)$.
Hence, it follows precisely as in the proof of Lemma \ref{lem:The-zero-locus-of det section}
that the singular metric $\phi$ is invariant under $F_{\tau}.$ Theorem
\ref{thm:gibbs log } thus follows from the following generalization
of Theorem \ref{thm:sing metric Fano}, applied to $(X^{N_{k}},\pi_{1}^{*}\Delta+...+\pi_{N_{k}}^{*}\Delta).$
To simplify the statement of the integrability assumption we assume
that $\phi$ is, locally, of the form \ref{eq:psh plus log hol}:
\end{rem}

\begin{thm}
\label{thm:sing metric log Fano}Let $(X,\Delta)$ be a log pair and
$V^{1,0}$ a holomorphic vector field on $X,$ which is tangent to
$\Delta.$ Assume that the flow of $V^{1,0}$ preserves a (possibly
singular) metric $\phi$ on $-(K_{X}+\Delta),$ i.e. $F_{\tau}^{*}\phi=\phi.$
If $\phi$ is, locally, of the form \ref{eq:phi is psh plus log intro}
and the measure $e^{-(\phi+\phi_{\Delta})}$ on $X$ has finite total
mass, then the flow of $V^{1,0}$ acts trivially on the complex vector
space $H^{0}\left(X,-k(K_{X}+\Delta)\right)$ for any positive integer
$k.$ In particular, if $-(K_{X}+\Delta)$ is big, then $V^{1,0}$
is trivial, i.e. vanishes identically. 
\end{thm}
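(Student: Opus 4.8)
The plan is to transcribe the two-step proof of Theorem~\ref{thm:sing metric Fano}, replacing the canonical bundle $K_X$ by the log canonical bundle $K_X+\Delta$ and the measure $e^{-\phi}$ by the log-adapted measure $e^{-(\phi+\phi_\Delta)}$ on the regular locus $X_{reg}$. Since $\phi$ is assumed locally of the form \ref{eq:phi is psh plus log intro} and $\phi_\Delta$ is itself a finite sum of terms $\lambda_i\log|f_i|$, the combined weight $\phi+\phi_\Delta$ is again of that form; hence, exactly as in the remark following Theorem~\ref{thm:sing metric Fano}, the resolution of the strong openness conjecture \cite{g-z} upgrades the finite-mass hypothesis $e^{-(\phi+\phi_\Delta)}\in L_{loc}^1$ to $e^{-p(\phi+\phi_\Delta)}\in L_{loc}^1$ for some $p>1$.

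First I would construct an invariant bounded domain. For small $\epsilon>0$ define, on $H^0(X,-k(K_X+\Delta))$,
\[
\mathcal{N}_\epsilon(s_k):=\left(\int_{X_{reg}}\left(|s_k|^2e^{-k\phi}\right)^{\epsilon/k}e^{-(\phi+\phi_\Delta)}\right)^{k/2\epsilon},
\]
whose finiteness for $\epsilon$ small enough I will justify below. Since $V^{1,0}$ is tangent to $\Delta$, the construction in the subsection on vector fields and log pairs supplies a canonical $\C^*$-equivariant lift of the flow to $-(K_X+\Delta)$ for which the section cutting out $\Delta$, and hence the metric $\phi_\Delta$, is invariant; together with the hypothesis $F_\tau^*\phi=\phi$ this renders the measure $e^{-(\phi+\phi_\Delta)}$, and therefore the one-homogeneous functional $\mathcal{N}_\epsilon$, invariant under $F_\tau$. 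Consequently $\Omega:=\{\mathcal{N}_\epsilon<1\}$ is a bounded, $F_\tau$-invariant domain in the finite-dimensional space $H^0(X,-k(K_X+\Delta))$, and the Liouville argument of Theorem~\ref{thm:sing metric Fano} forces $F_\tau$ to act trivially there.

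For the final assertion I would assume $-(K_X+\Delta)$ big and pass to the Kodaira map $\Phi_k$ of $-k(K_X+\Delta)$, a meromorphic map defined off the base locus on the connected complex manifold $X_{reg}$. Bigness of this $\Q$-line bundle together with Siegel's lemma \cite[Lemma 2.2.6]{m-m} yields, for $k$ large, that $\Phi_k$ has maximal rank, hence restricts to a biholomorphism between some open $U\subset X_{reg}$ and its image. Since the induced projective transformation $[F_\tau]$ is the identity by the previous step, the intertwining relation \ref{eq:intertw} forces $F_\tau=\mathrm{id}$ on $U$, whence $V^{1,0}\equiv0$ on $X_{reg}$ by analytic continuation and thus on all of $X$.

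The step I expect to be the main obstacle is the finiteness of $\mathcal{N}_\epsilon$, in particular near $X\setminus X_{reg}$ and along $\Delta$. The difficulty is that the perturbing factor $\left(|s_k|^2e^{-k\phi}\right)^{\epsilon/k}$ carries the extra weight $e^{-\epsilon\phi}$, which is \emph{not} absorbed by the klt measure $e^{-(\phi+\phi_\Delta)}$ itself (in the smooth case both powers involve the same weight, whereas here $\phi$ and $\phi_\Delta$ enter with different exponents). Writing the local density of the integrand as a bounded factor times $e^{-(\phi+\phi_\Delta)}e^{-\epsilon\phi}$ and applying H\"older's inequality, one reduces finiteness to the $L_{loc}^p$-bound of the first paragraph together with $e^{-p'\epsilon\phi}\in L_{loc}^1$, where $p'$ is the conjugate exponent; the latter holds for $\epsilon$ small by the positivity of the log canonical threshold of the psh-plus-log weight $\phi$. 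Granting this, the remainder is a faithful transcription of the scheme of Theorem~\ref{thm:sing metric Fano}.
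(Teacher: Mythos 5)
Your proposal follows essentially the same route as the paper's own proof: the paper defines the same functional $\mathcal{N}_{\epsilon}$ with respect to the measure $e^{-(\phi+\phi_{\Delta})}$, obtains its finiteness for small $\epsilon$ from the assumed local form \ref{eq:phi is psh plus log intro} of $\phi$ together with the strong openness theorem \cite{g-z} applied on a log resolution of $(X,\Delta)$, notes its invariance under $F_{\tau}$, and then concludes exactly as in the proof of Theorem \ref{thm:sing metric Fano} (bounded invariant domain, Liouville, Kodaira map). Your H\"older-inequality treatment of the finiteness of $\mathcal{N}_{\epsilon}$, separating the factors $e^{-p(\phi+\phi_{\Delta})}$ and $e^{-p'\epsilon\phi}$, is a correct filling-in of the step the paper delegates to the citation of \cite{g-z}, and it works regardless of the signs of the coefficients of $\Delta$.
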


\begin{proof}
Given $\epsilon>0$ let $\mathcal{N}_{\epsilon}$ the positively one-homogeneous
function on $H^{0}\left(X,-k(K_{X}+\Delta)\right)$ defined by 
\[
\mathcal{N}_{\epsilon}(s_{k}):=\left(\int_{X}\left(\left|s_{k}\right|^{2}e^{-k\phi}\right)^{\epsilon/k}e^{-(\phi+\phi_{\Delta})}\right)^{k/2\epsilon},
\]
 defines a finite function on $H^{0}\left(X,-k(K_{X}+\Delta)\right)$
when $\epsilon$ is sufficiently small, by the assumption on $\phi$
and the resolution of the strong openness conjecture \cite{g-z} (applied
to a log resolution of $(X,\Delta)$). By the invariance assumption
$\mathcal{N}_{k}$ is invariant under the flow $F_{\tau}$ of $V^{1,0},$
i.e. $F_{\tau}^{*}\mathcal{N}_{\epsilon}=\mathcal{N}_{\epsilon}.$
Hence, we can proceed exactly as in the proof of Theorem \ref{thm:sing metric Fano}.
\end{proof}

\subsection{Examples: the case of log Fano curves}

It may be illuminating to consider Theorem \ref{thm:gibbs log } in
the simplest case where $X$ is a complex curve, i.e. $\dim X=1.$
Then 
\[
\Delta:=\sum_{1=1}^{m}x_{i}w_{i}
\]
 for given points $x_{1},...,x_{m}$ on $X$ and rational coefficients
$w_{i}.$ First consider the case when $X$ has genus zero, $X=\P^{1}.$
Then $-(K_{X}+\Delta)$ is ample iff 
\begin{equation}
2-\sum w_{i}>0.\label{eq:def of d L}
\end{equation}
 Moreover, as shown in \cite{berm12} $(X,\Delta)$ is Gibbs stable
at a sufficiently large level $k$ iff the following weight condition
holds: 
\begin{equation}
w_{i}<\sum_{i\neq j}w_{j},\,\,\,\forall i\label{eq:weight condition}
\end{equation}
(this condition first appeared in the existence result for conical
Kähler-Einstein metrics on $\P^{1}$ established in \cite{tr}). In
particular, given three points there always exists weights $w_{1},w_{2}$
and $w_{3}$ satisfying the inequalities \ref{eq:def of d L} and
\ref{eq:weight condition} and thus the corresponding log pair $(X,\Delta)$
is Gibbs stable at some level $k.$ Hence, Theorem \ref{thm:gibbs log }
implies the basic fact that any holomorphic vector field on $\P^{1}$
with three zeros vanishes identically (which follows, for example
from the fact that the space $H^{0}(X,-K_{X})$ of holomorphic vector
fields on $X$ is $3-$dimensional). 

Next, consider the case when $X$ has genus one (for genus strictly
larger than one there are no non-trivial holomorphic vector fields
at all). Given a point $x$ on $X$ let $\Delta$ be the divisor 
\[
\Delta=-k^{-1}x
\]
 for a fixed positive integer $k.$ Then $-(K_{X}+\Delta)$ has positive
degree (since $-K_{X}$ is trivial) and is thus ample. Moreover, $(X,\Delta)$
is Gibbs stable at level $k.$ Indeed, by construction, $-k(K_{X}+\Delta)=-k\Delta$
coincides with the line bundle of degree one over $X$ defined by
the point $x.$ It thus follows from the Riemann-Roch theorem that
$N_{k}=1.$ In other words, $-k(K_{X}+\Delta)$ admits a non-trivial
holomorphic section $s,$ uniquely determined up to multiplication
by $\C^{*}.$ Hence, 
\[
\left|\det S^{(k)}\right|^{-2/k}|s_{\Delta}|^{-2}=|s|^{-2/k}|s|^{+2/k}
\]
 which is trivially in $L^{1}(X),$ i.e. $(X,\Delta)$ is Gibbs stable
at level $k.$ Theorem \ref{thm:gibbs log } thus implies the basic
fact that there are no non-trivial holomorphic vector fields on $X$
preserving a given point $x$ on (which follows, for example, from
the fact that the space of holomorphic vector fields $H^{0}(X,-K_{X})$
on $X$ is one-dimensional and thus generated by the vector field
induced from the vector field $\partial/\partial z$ on $\C$ under
the standard isomorphism $X\simeq\C/\Lambda).$ 

\subsection{Log stability with respect to a parameter}

The Gibbs stability of a log pair $(X,\Delta)$ can, in fact, be defined
for some non-integer levels $k.$ Indeed, all that is needed is that
$-k(K_{X}+\Delta)$ defines a line bundle (i.e. a Cartier divisor).
For example, if 
\[
\Delta=(1-\gamma)S,\,\,\,\gamma\in]0,\infty[
\]
 for an anti-canonical divisor $S,$ then $-k(K_{X}+\Delta)$ is a
well-defined line bundle for any $k\in\gamma^{-1}\Z.$ Indeed, if
$k=\gamma^{-1}p$ for $p\in\Z,$ then 
\[
-k(K_{X}+\Delta)=-pK_{X}.
\]
In this case it is convenient to make the following
\begin{defn}
Let $S$ be anti-canonical divisor on a variety $X$ with log terminal
singularities and $\gamma\in]0,\infty[.$ Then $(X,S)$ is said to
be Gibbs stable at level $k$ with respect to the parameter $\gamma$
if $(X,(1-\gamma)S)$ is Gibbs stable at level $\gamma^{-1}k.$ Equivalently,
this means that the $\Q-$divisor $\gamma\mathcal{D}_{N_{k}}+(1-\gamma)\left(\pi_{1}^{*}S+\cdots\pi_{N_{k}}^{*}S\right)$
is klt on $X^{N_{k}},$ where $\mathcal{D}_{N_{k}}$ is the anti-canonical
divisor $\Q-$divisor on $X^{N_{k}}$ defined in Section \ref{subsec:Gibbs-stability-and}
and $\pi_{i}$ denotes the projection $X^{N_{k}}\rightarrow X$ from
$X^{N_{k}}$ onto the $i$th factor of $X^{N_{k}}.$
\end{defn}

One advantage of introducing the parameter $\gamma$ is that it can
be used as a deformation parameter (which from the point of view of
statistical mechanics plays the role of the negative of the inverse
temperature; see the discussion in \cite[Section 2.4]{berm12}). In
the following section we will provide a geometric application of log
Gibbs stability of $(X,S)$ with respect to a small parameter $\gamma.$ 
\begin{rem}
Formally, when $k=\infty$ the parameter $\gamma$ corresponds to
Donaldson's time-parameter in his singular generalization of Aubin's
method of continuity \cite{do2} for Kähler metrics $\omega_{\gamma}:$
\[
\mbox{\ensuremath{\mbox{Ric}}\ensuremath{\,\omega}}_{\gamma}=\gamma\omega_{\beta}+(1-\gamma)[S],
\]
 where $\mbox{\ensuremath{\mbox{Ric}}\ensuremath{\,\omega}}_{\gamma}$
denotes the Ricci curvature current of the singular Kähler metric
$\omega_{\gamma}$ and $[S]$ denotes the current of integration along
the divisor $S.$ As explained in \cite{berm12}, this formal analogy
can be made rigorous under a zero-free assumption on the zeta type
function appearing in formula \ref{eq:integr in pf theorem invariant non klt}
below (viewed as a meromorphic function of $\gamma).$
\end{rem}

\section{A vanishing theorem for holomorphic vector fields preserving anti-canonical
divisors that are not klt}

In this final section we apply the concept of log Gibbs stability
to prove Theorem \ref{thm:not klt intro}, stated in the introduction:
\begin{thm}
\label{thm:invariant non klt}Let $X$ be a compact complex manifold
such that $-K_{X}$ is big and $s$ a non-trivial holomorphic section
of $-K_{X},$ whose zero-locus $S$ is a non-singular connected subvariety
of $X.$ If $S$ is preserved by the flow a holomorphic vector field
$V^{1,0}$ on $X,$ then $V^{1,0}$ vanishes identically. More generally,
the result holds when the varieties $X$ and $S$ have log terminal
singularities and the divisor on $X$ defined by $s$ is irreducible.
\end{thm}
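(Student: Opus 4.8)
The plan is to reduce Theorem~\ref{thm:invariant non klt} to Theorem~\ref{thm:sing metric Fano} by a perturbation trick, exactly as outlined in the introduction. The essential difficulty is that the section $s$ of $-K_X$ cutting out $S$ induces a measure $|s|^{-2/1}$ on $X$ which is \emph{not} in $L^1_{loc}$ (since $S$ is a reduced divisor with coefficient $1$, it fails the klt condition), so Theorem~\ref{thm:sing metric Fano} cannot be applied directly to $X$ itself. To circumvent this, I would pass to the $N_k$-fold product $X^{N_k}$ for a fixed $k$ with $N_k>1$ and work with the anti-canonical $\Q$-divisor
\begin{equation}
\gamma\mathcal{D}_{N_{k}}+(1-\gamma)\left(\pi_{1}^{*}S+\cdots+\pi_{N_{k}}^{*}S\right)\label{eq:perturbed plan}
\end{equation}
for a small parameter $\gamma>0$, where $\mathcal{D}_{N_k}$ is the Slater divisor of Section~\ref{subsec:Gibbs-stability-and}. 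The idea is that the genuinely klt divisor $\mathcal{D}_{N_k}$ (coming from Gibbs stability at a high enough level, or from bigness of $-K_X$) absorbs the non-integrable singularity of the $\pi_i^*S$ terms when $\gamma$ is sufficiently small.

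First I would establish the key integrability claim: for $k$ with $N_k>1$ and $\gamma>0$ sufficiently small, the divisor \eqref{eq:perturbed plan} is klt on $X^{N_k}$, i.e. the corresponding measure $\bigl|\det S^{(k)}\bigr|^{-2\gamma/k}\prod_i |\pi_i^* s|^{-2(1-\gamma)}$ lies in $L^1_{loc}$. This is where the main work lies. The point is that $\det S^{(k)}(x_1,\dots,x_{N_k})$ vanishes whenever two of the points collide or, more relevantly, when the configuration is in ``bad position''; in particular it vanishes to positive order along each $\pi_i^* S$ whenever the remaining points impose enough conditions, so a small positive power of $\det S^{(k)}$ in the denominator competes against the order-one poles from $\prod_i |\pi_i^* s|^{-2}$. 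Concretely I would compute, on a log resolution adapted to the simple-normal-crossing locus where the divisors meet, the discrepancies and check that all coefficients stay strictly below $1$ for $\gamma$ small; equivalently one verifies $\mathrm{lct}\bigl(\text{divisor in }\eqref{eq:perturbed plan}\bigr)>1$. Since $S$ is assumed non-singular (or $X,S$ log terminal with $S$ irreducible), the pair $(X,(1-\gamma)S)$ is klt for small $\gamma$, and the Gibbs-stability interpretation identifies this with log Gibbs stability of $(X,S)$ at level $k$ with respect to $\gamma$. The connectedness/irreducibility hypothesis enters precisely here: it guarantees that $S$ is cut out by a single irreducible section, so that the multiplicities behave as expected and no component of $S$ escapes the absorption.

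Once the klt property of \eqref{eq:perturbed plan} is in hand, the finish is immediate. By Lemma~\ref{lem:The-zero-locus-of det section} the Slater divisor $\mathcal{D}_{N_k}$ is invariant under the diagonal flow of any holomorphic vector field $V^{1,0}$ on $X$, and by hypothesis $S$ is preserved by the flow of $V^{1,0}$, hence so is each $\pi_i^* S$; therefore the entire divisor \eqref{eq:perturbed plan}, and the measure it induces, is invariant under the diagonal flow $F_\tau$ on $X^{N_k}$. The induced metric $\phi$ on $-(K_{X^{N_k}})$ is locally of the form \eqref{eq:psh plus log hol} (a sum of logarithms of holomorphic functions, with the $(1-\gamma)$ coefficients possibly making it non-psh, which is exactly why I invoke the general Theorem~\ref{thm:sing metric Fano} rather than Berndtsson's Proposition~\ref{prop:(Berndtsson).-Let-}). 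Since $-K_X$ is big, $-K_{X^{N_k}}$ is big, and the $L^p_{loc}$-integrability for some $p>1$ follows from the klt property via the strong openness theorem \cite{g-z}. Applying Theorem~\ref{thm:sing metric Fano} to $X^{N_k}$ with this invariant metric yields that the diagonal $V^{1,0}$ vanishes identically on $X^{N_k}$, and hence $V^{1,0}\equiv 0$ on $X$. The main obstacle is unquestionably the first step --- verifying that the absorbed divisor is klt for small $\gamma$ --- since it requires controlling the vanishing order of the Slater determinant along the product of copies of $S$; in the singular case this additionally demands that the log resolution be chosen compatibly with the log terminal singularities of both $X$ and $S$.
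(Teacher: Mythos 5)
Your overall reduction is the right one and matches the paper's: pass to $X^{N_{k}}$ with the perturbed anti-canonical divisor $\gamma\mathcal{D}_{N_{k}}+(1-\gamma)\left(\pi_{1}^{*}S+\cdots+\pi_{N_{k}}^{*}S\right)$, note that it is invariant under the diagonal flow, and apply Theorem \ref{thm:sing metric Fano} (equivalently, Theorem \ref{thm:gibbs log }). But the step you yourself identify as the main obstacle --- the klt property of this divisor for small $\gamma$ --- is exactly the content of the paper's proof, and your sketch of it contains a genuine gap, starting with an error in the guiding heuristic. You write that $\det S^{(k)}$ vanishes along each $\pi_{i}^{*}S$ and that this vanishing ``competes against'' the poles $|\pi_{i}^{*}s|^{-2(1-\gamma)}$. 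This is backwards: $\det S^{(k)}$ sits in the \emph{denominator}, so any vanishing of it along $\pi_{i}^{*}S$ \emph{adds} to the pole there. If $l$ denotes the order of vanishing of $\det S^{(k)}$ along $\pi_{i}^{*}S$, the total coefficient of $\pi_{i}^{*}S$ in your divisor is $(1-\gamma)+\gamma l/k$, and klt-ness along $\pi_{i}^{*}S$ requires the strict inequality $l<k$. Establishing $l<k$ is a real step: the paper factorizes $\det S^{(k)}=s(x_{1})^{\otimes l}\cdots s(x_{N_{k}})^{\otimes l}q$ with $q$ not divisible by any $s(x_{i})$ (this is where irreducibility of $S$ enters and where the common value of $l$ comes from total antisymmetry), and shows that $l=k$ would force $q$ to be constant and hence $N_{k}=1$, contradicting $N_{k}>1$.

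Second, even granting $l<k$, ``compute discrepancies on a log resolution and check the coefficients stay below $1$ for small $\gamma$'' is not an argument: as $\gamma\rightarrow0$ your divisor degenerates to $\pi_{1}^{*}S+\cdots+\pi_{N_{k}}^{*}S$, whose coefficients equal $1$ (log canonical, not klt), so klt-ness for small $\gamma>0$ is not a soft perturbation statement; one must control the residual factor $q$ \emph{along} $S$, where the pole $|s|^{-2(1-\delta)}$ is nearly critical. The paper does this with inversion of adjunction \cite[Thm 7.5]{ko0} (the Ohsawa--Takegoshi theorem in the smooth case): if $\gamma Q_{|S}$ is klt near $S$ then $\gamma Q+(1-\delta)S$ is klt near $S$, where $Q$ is the divisor of $x\mapsto q(x,x_{2},\dots,x_{N_{k}})$ for generic frozen points; the hypothesis that $S$ has log terminal singularities enters to guarantee $\text{lct}(S,-K_{X|S})>0$, so that admissible $\gamma$ exist. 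One then still needs a semicontinuity-of-integrability-thresholds argument and an iteration over the $N_{k}$ factors to pass from one-variable integrability at generic frozen points to finiteness of the full integral \ref{eq:integr in pf theorem invariant non klt}. Relatedly, your sentence identifying ``$(X,(1-\gamma)S)$ is klt for small $\gamma$'' with log Gibbs stability of $(X,S)$ conflates two very different statements: the former is a condition on $X$ alone, while the latter is precisely the klt property of the divisor on $X^{N_{k}}$ that you are trying to prove. So the reduction is correct, but the heart of the proof is missing.
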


In contrast to Prop \ref{prop:inv sing metric L} the anti-canonical
divisor on $X$ defined by $s$ is not klt (but $(S,0)$ is klt, since
$S$ is assumed to have log terminal singularities). Without the regularity
and irreducibility assumption the result thus fails, in general. This
is illustrated by the toric example following Theorem \ref{thm:sing metric Fano},
where $s$ is the standard invariant anti-canonical divisor on $X,$
$S=X-\C^{*n}.$ In particular, when $X=\P^{1}$ the support $S$ of
this $\C^{*}-$invariant divisor consists of the two points $0$ and
$\infty$ in $\P^{1}$ and is thus non-singular, but not connected/irreducible.

\subsection{Proof of Theorem \ref{thm:invariant non klt}}

Fix a positive integer $k$ such that $N_{k}>1.$ Denote by $\pi_{i}$
the projection $X^{N_{k}}\rightarrow X$ from $X^{N_{k}}$ onto the
$i$th factor of $X^{N_{k}}.$ The section $s$ induces, by taking
tensor products, a holomorphic section $s^{\otimes N_{k}}$ of $-K_{X^{N_{k}}}.$
By Theorem \ref{thm:gibbs log } it is enough to show that $(X,(1-\gamma)S)$
is Gibbs stable at level $\gamma^{-1}k$ (i.e. that $(X,S)$ is Gibbs
stable at level $k$ with respect to the parameter $\gamma).$ In
other words, it is enough to show that
\begin{equation}
\int_{X^{N_{k}}}\left|\det S^{(k)}\right|^{-2\gamma/k}\left|s^{\otimes N_{k}}\right|^{-2(1-\gamma)}<\infty.\label{eq:integr in pf theorem invariant non klt}
\end{equation}
To this end we factorize
\[
\det S^{(k)}(x_{1},...,x_{N_{k}})=s(x_{1})^{\otimes l_{1}}s(x_{2})^{\otimes l_{2}}\cdots q(x_{1},x_{2},...,x_{N_{k}}),
\]
 where $q(x_{1},...x_{N_{k}})\in H^{0}\left(X^{N_{k}},(k-l_{1})\pi_{1}^{*}(-K_{X})+...(k-l_{N_{k}})\pi_{N_{k}}^{*}(-K_{X})\right)$
is not divisible by the irreducible divisor $s(x_{i})$ on $X^{N_{k}}$
for any $i.$ The integer $l_{1}$ coincides with the order of vanishing
of $\det S^{(k)}$ along $\{s(x_{1})=0\}\subset X^{N_{k}},$ for generic
$(x_{2},...,x_{N_{k}})$ and likewise for $i\neq1.$ Since $\det S^{(k)}(x_{1},...,x_{N_{k}})$
is totally antisymmetric it follows that $l_{1}=l_{i}$ for all $i.$
This means that there exists $l\leq k$ such that 
\[
\det S^{(k)}(x_{1},...,x_{N_{k}})=s(x_{1})^{\otimes l}s(x_{2})^{\otimes l}\cdots q(x_{1},x_{2},...,x_{N_{k}}),
\]
 where $q(x_{1},...x_{N_{k}})\in H^{0}(X^{N_{k}},-(k-l)K_{X^{N_{k}}}).$
The assumption $N_{k}>1$ forces $l<k.$ Indeed, otherwise, $q\in H^{0}(X^{N_{k}})$
and thus $q$ is constant, $q=C,$ giving $\det S^{(k)}(x_{1},...,x_{N_{k}})=Cs(x_{1})^{\otimes k}s(x_{2})^{\otimes k}\cdots.$
Since $\det S^{(k)}(x_{1},...,x_{N_{k}})$ is totally anti-symmetric
this can only happen if $N_{k}=1.$ Hence, 
\[
\left|\det S^{(k)}\right|^{-2\gamma/k}\left|s^{\otimes N_{k}}\right|^{-2(1-\gamma)}=\left|q\right|^{-2\gamma/k}\left|s(x_{1})\right|^{-2(1-\gamma+\gamma l/k)}\cdots\left|s(x_{N_{k}})\right|^{-2(1-\gamma+\gamma l/k)}
\]
By construction, for generic $x_{2},...,x_{N_{k}}$ the section $x\mapsto q(x,x_{2},...,x_{N_{k}})$
does not vanish identically along $\{s(x_{1})=0\}\subset X$ and likewise
for $i\neq1.$ Note that since $l<k$ the exponent $(1-\gamma+\gamma l/k)$
is strictly smaller than $1.$ As a consequence, we also have 
\[
k^{-1}=(k-l)^{-1}(k-l)/k<(k-l)^{-1}
\]
 and thus (since we may, without loss of generality, assume that $\left|q(x_{1},...,x_{N_{k}})\right|\leq1)$
\[
\left|\det S^{(k)}\right|^{-2\gamma/k}\left|s^{\otimes N_{k}}\right|^{-2(1-\gamma)}\leq\left(\left|q(x_{1},...,x_{N_{k}})\right|^{-2/(k-l)}\right)^{\gamma}\left|s(x_{1})\right|^{-2(1-\delta)}\cdots\left|s(x_{N_{k}})\right|^{-2(1-\delta)}
\]
 for some $\delta\in]0,1[.$ Next we will apply inversion of adjunction
\cite[Thm 7.5]{ko0} in the following form: let $B$ be a divisor
on $X$ such that the support of $S$ is not contained in the support
of $B,$ then 
\[
B_{|S}\,\text{klt close to \ensuremath{S\implies B+(1-\delta)S\,\text{klt \ensuremath{\text{close to \ensuremath{S}}}}}}
\]
for any $\delta\in[0,1[$ (in the case when $X$ and $S$ are non-singular
this is a direct consequence of the local Ohsawa-Takegoshi extension
theorem; see \cite[Section 7]{ko0}). Now set $B=\gamma Q,$ where
$Q$ is the anti-canonical divisor on $X$ defined as $(k-l)^{-1}$
times the divisor cut out by the holomorphic section $x\mapsto q(x,x_{2},...,x_{N_{k}})$
of $-(k-l)K_{X},$ for a fixed generic $(x_{2},...,x_{N_{k}})\in X^{N_{k}-1}.$
Then 
\[
\gamma<\alpha(S,-K_{X|S})\implies\gamma Q_{|S}\,\text{klt close to \ensuremath{S}},
\]
 where $-K_{X|S}$ denotes the restriction to $S$ of $-K_{X}$ and
$\text{lct}(Y,L)$ denotes the \emph{global log canonical threshold
}of a given line bundle $L$ over a variety $Y$ with log terminal
singularities (which, by \cite{dem}, coincides with Tian's \emph{alpha-invariant}
of $L\rightarrow Y):$ 
\[
\text{lct}(L):=\sup_{t>0}\left\{ t:\,\,tk^{-1}(q_{k}=0)\,\text{is \ensuremath{\text{klt \ensuremath{\forall}}q_{k}\in H^{0}(Y,kL),}\ensuremath{\forall k\in\Z_{+}}}\right\} ,
\]
 where $(q_{k}=0)$ denotes the divisor in $Y$ defined cut-out by
$q_{k},$ including multiplicities. On any open set $U$ in the complement
of the zero-locus of $S$ in $X$ we trivially have that 
\[
\gamma<\text{lct}(X,-K_{X})\implies\gamma Q+(1-\delta)S\,\text{klt \ensuremath{\text{on \ensuremath{U}}}.}
\]
Hence, 
\[
\gamma<\min\left\{ \text{lct}(X,-K_{X}),\text{lct}(S,-K_{X|S})\right\} \implies\int_{X}\left|\det S^{(k)}\right|^{-2\gamma/k}\left|s^{\otimes N_{k}}\right|^{-2(1-\gamma)}<\infty.
\]
 It then follows from the compactness of $X$ and the semi-continuity
of integrability thresholds (just as in the appendix of \cite{ber1})
that there exists a constant $C_{\gamma}$ only depending on $\gamma$
such for any fixed generic $(x_{2},...,x_{N_{k}})\in X^{N_{k}-1}$
\[
\int_{x\in X}\left|\det S^{(k)}(x,x_{2},...,x_{N_{k}})\right|^{-2\gamma/k}\left|s(x)\right|^{-2(1-\gamma)}\leq C_{\gamma}\left(\sup_{x\in X}\left\Vert \det S^{(k)}(x,x_{2},...,x_{N_{k}})\right\Vert \right)^{-2\gamma/k}
\]
 for a given fixed smooth metric $\left\Vert \cdot\right\Vert $ on
$-K_{X}.$ As a consequence, the inequality, in fact holds for\emph{
any} fixed $(x_{2},...,x_{N_{k}})\in X^{N_{k}-1}.$ Iterating this
inequality $N_{k}-1$ times (replacing the index $i=1$ with any index
$i$) thus gives 
\[
\int_{X^{N_{k}}}\left|\det S^{(k)}\right|^{-2\gamma/k}\left|s^{\otimes N_{k}}\right|^{-2(1-\gamma)}\leq C_{\gamma}^{N_{k}}\left(\sup_{X^{N_{k}}}\left\Vert \det S^{(k)}\right\Vert \right)^{-2\gamma/k}<\infty,
\]
 which concludes the proof of the theorem, using that $\text{lct}(Y,L)>0$
if $Y$ has log terminal singularities and $\dim H^{0}(Y,kL)\geq1$
for some positive integer $k.$ Indeed, if $q_{k}$ is a non-trivial
element in $H^{0}(Y,kL),$ then the log pair $\left(Y,tk^{-1}(q_{k}=0)\right)$
is klt when $t$ is a sufficiently small positive number, since, $(Y,0)$
is assumed klt and the klt condition is stable under small perturbations.

\end{document}